\DeclareSymbolFontAlphabet{\mathcal}{symbols}
\numberwithin{equation}{section}
\definecolor{trama}{gray}{.875}
\newtheorem{theorem}{Theorem}
\newtheorem{proposition}{Proposition}[section]
\newtheorem{theoremv}[proposition]{Theorem}
\newtheorem{corollaire}[proposition]{Corollary}
\theoremstyle{definition}
\newtheorem{definition}[proposition]{Definition}
\newtheorem{exemple}[proposition]{Example}
\newtheorem{remarque}[proposition]{Remark}
\newcommand{\secref}[1]{Section~\ref{#1}}
\newcommand{\thmref}[1]{Theorem~\ref{#1}}
\newcommand{\propref}[1]{Proposition~\ref{#1}}
\newcommand{\corref}[1]{Corollary~\ref{#1}}
\newcommand{\remref}[1]{Remark~\ref{#1}}
\newcommand{\exemref}[1]{Example~\ref{#1}}
\newcommand{\defref}[1]{Definition~\ref{#1}}
\def\ov{\overline}
\newcommand{\cU}{\mathcal U}
\newcommand{\cF}{\mathcal F}
\def\cAb{{\mathcal Ab}}
\def\gd{{\mathfrak{d}}}
\def\gC{{\mathfrak{C}}}
\def\gH{{\mathfrak{H}}}
\def\gI{{\mathfrak{I}}}
\def\gT{{\mathfrak{T}}}
\def\tc{{\mathtt c}}
\def\tv{{\mathtt v}}
\def\tw{{\mathtt w}}
\def\N{\mathbb{N}}
\def\R{\mathbb{R}}
\def\Z{\mathbb{Z}}
\def\im{{\rm Im\,}}
\def\id{{\rm id}}
\def\codim{{\rm codim\,}}
\def\pr{{\rm pr}}
\def\sing{{\mathrm{sing}}}
\def\sd{{\mathrm{sd}\,}}
\def\depth{{\rm depth} \,}
\def\rc{{\mathring{\tc}}}
\def\cS{{\mathcal S}}
\def\cT{{\mathcal T}}
\def\K{{\rm K}}
   \def\sda{{\mathfrak{sd}\,}}
 \newcommand{\menos}{\backslash} 
\newcommand{\bi}[2]{{#1}^{^{#2}}}
\newcommand{\ib}[2]{{#1}_{_{#2}}}
\newcommand{\hiru}[3]{{#1}_{_{#2}}{( #3 )}}
\newcommand{\lau}[4]{{#1}^{^{#2}}_{_{#3}}{( #4 )}}
\newcommand{\bost}[5]{{#1}^{^{#2}} \! {#3}_{_{#4}}{( #5 )}}
\author{David Chataur}
\address{Lafma\\
Universit\'e de Picardie Jules Verne\\
33, rue Saint-Leu\\
80039 Amiens Cedex~1\\
         France}
\email{David.Chataur@u-picardie.fr}
\author{Martintxo Saralegi-Aranguren}
\address{Laboratoire de Math{\'e}matiques de Lens EA 2462 \\
       F\'ed\'eration CNRS Nord-Pas-de-Calais FR2956\\
      Universit\'e d'Artois\\
          62307 Lens Cedex\\
         France}
\email{martin.saraleguiaranguren@univ-artois.fr}
\author{Daniel Tanr\'e}
\address{D\'epartement de Math{\'e}matiques\\
         UMR 8524 \\
         Universit\'e de Lille\\
         59655 Villeneuve d'Ascq Cedex\\
         France}
\email{Daniel.Tanre@univ-lille.fr}
\title[General perversities and topological invariance]{Intersection homology. \\ General perversities and topological invariance}
\date{\today}
\thanks{The third author is partially supported by the MINECO grant  MTM2016-78647-P  and ANR-11-LABX-0007-01  ``CEMPI''}
\keywords{Intersection homology; Topological invariance ; Perversiy; CS set; Intrinsic CS set; Pseudomanifold.}
\subjclass{55N33, 58A35, 57N80}
\begin{document}

\begin{abstract} 
Topological invariance of the intersection homology of a pseudomanifold without codimension one strata, proven by
Goresky and  MacPherson, is one of the main features of this homology.
This  property is true for  codimension-dependent  perversities with some growth conditions, 
verifying $\ov p(1)=\ov p(2)=0$. 
King reproves this invariance by associating an intrinsic pseudomanifold $X^*$ to any  pseudomanifold $X$.
 His proof consists of an isomorphism between the associated intersection homologies  
 $H^{\ov{p}}_{*}(X)  \cong   H^{\ov{p}}_{*}( X^*)$
for any  perversity $\ov{p}$  with the same growth conditions verifying $\ov p(1)\geq 0$.

In this work, we prove a certain topological invariance within the framework of strata-dependent perversities, $\ov{p}$, 
which corresponds to the classical topological invariance if $\ov{p}$ is a GM-perversity.
We also extend it to the tame intersection homology, a variation of the intersection homology, particularly suited for 
``large'' perversities, if there is no singular strata on $X$ becoming  regular  in
$X^*$.
In particular, under the above conditions, the intersection homology  and the tame intersection homology 
are invariant under a refinement of the stratification.
\end{abstract}

\maketitle


The definition of intersection homology  relies on a stratification of the space and it
is natural to ask for its impact. 
In fact, Goresky and MacPherson proved, in their first work about intersection homology (\cite{GM1}),
that there is no dependence  of the stratification, a main property of this theory.
More precisely,  if  $\ov{p}\colon \Z_{>1}\to \Z$ is a classical perversity verifying 
$\ov{p}(t)\leq\ov{p}(t+1)\leq\ov{p}(t)+1$, $\ov{p}(2)=0$, then the $\ov{p}$-intersection homology
is a topological invariant of a pseudomanifold without codimension one strata. 
In the sequel, such perversity is called a GM-perversity.

Intersection homology  also exists  for more general  perversities, 
$\ov{p}\colon \cS_{X}\to \Z$,
defined on the family of strata of a filtered space $X$ and taking values in  $\Z$, cf. \cite[Section 1.1]{MacPherson90}
and \remref{rem:pourquoigenerales}. 
Since these perversities are defined on the set of strata it is natural to conclude that 
we could not expect any topological invariance in such a general context.
In this work, we present some conditions on  perversities and  stratifications which  lead to a topological invariance
with this generality.

To achieve this goal, we first develop the properties of a complex, $C_{*}^{\ov{p}}(X;G)$, giving intersection homology.
Its main feature lies in the choice of the simplices, $\sigma\colon \Delta\to X$, called
\emph{filtered simplices.} They are such that,
for any stratum $S$ of $X$, the intersection $\sigma^{-1}(S)\cap \Delta$ is 
 either the emptyset or a face of $\Delta$. Such complex is already present in
 the Appendix of \cite{CST1}.
Its suitability with the set of  strata, makes it possible to proceed to a blow-up along the strata
and thus to develop a cohomology well adapted to the geometry of the singular spaces.
We study it  in \cite{CST4}, where we prove the existence of cap and cup products for coefficients
in any commutative ring. The existence of a Poincar\'e isomorphism via a cap product, between this 
blown-up cohomology and the intersection homology, is presented in \cite{CST2}, see also \cite{ST2}. 
Let us notice as well that the use of these simplices allows a decomposition of Borel-Moore chains as
a locally finite sum of \emph{intersection chains,} which brings an efficient tool in \cite{CST5} and \cite{ST1}. 
On the theoretical aspect,
we can also note the unification of the occuring objects, the domain of the filtered simplices lying
in the same category as the spaces: they are particular cases of 
locally cone-like spaces introduced by Siebenmann in \cite{Sieb} and called CS sets.

\smallskip
The first point is the proof that the complex built on filtered simplices gives the usual intersection homology.
The case of codimension-dependent perversities has been considered in \cite[Proposition A.29]{CST1}; we 
present here a proof for perversities defined on the set of strata.
Let us denote by $I^{\ov{p}}C_{*}(X;G)$
the complex introduced by  Goresky and MacPherson in \cite{GM1}, 
adapted to general perversities as in \cite{MacPherson90} or \cite{FriedmanBook}.

\begin{theorem} 
Let $X$ be a CS set and $\ov{p}\colon \cS_{X}\to \Z$ be a perversity.
The canonical inclusion 
$C_{*}^{\ov{p}}(X;G)\hookrightarrow I^{\ov{p}}C_{*}(X;G)$
is a chain map inducing an isomorphism in  homology.
\end{theorem}

The proof 
 needs the existence of a Mayer-Vietoris sequence (\propref{prop:MVhomologie}), 
a stratified homotopy invariance (\propref{prop:homotopesethomologie}) and the determination
of the intersection homology of a cone (\propref{prop:homologiecone}).
Under this generality, \thmref{thm:martinmacphersonhomologie} is used in \cite{CST4,CST2}.

\medskip
If one works with perversities taking values outside of the range $[\ov{0},\ov{t}]$ defined by Goresky and MacPherson
(\cite{GM1}),
some fundamental properties disappear. Recall (see \defref{def:allowableMacPherson})
 the key notion of $\ov{p}$-allowable simplices.
In the classical range of \cite{GM1}, 
the support of the boundary  of a  $\ov{p}$-allowable simplex is never included in the singular set.
Outside this range, this property is no longer true, 
with bad consequences on the behaviour of intersection homology.
In \cite{MR2210257} (see also \cite{MR2276609})  a modification of intersection homology is presented, 
matching with the usual intersection homology if $\ov{p}\leq\ov{t}$. The idea is to eliminate the 
$\ov{p}$-allowable simplices 
included in the singular part, see \defref{def:tameGreg}.
 With this modification, 
called \emph{tame intersection homology,}
the deRham isomorphism (\cite{MR2210257}) and the Poincar\'e Duality  (\cite{CST2,FM,ST1,ST2}) 
remain true. 
We present  a filtered version, $\gC _*^{\ov p} (X;G )$, of the tame intersection complex  
and denote $I^{\ov{p}} \gC_* (X;G )$ the non-GM complex introduced by G. Friedman in \cite{FriedmanBook}.

\begin{theorem} 
Let $X$ be a CS set and $\ov{p}\colon \cS_{X}\to \Z$ be a perversity.
The canonical inclusion 
$ \gC _*^{\ov p} (X;G )\hookrightarrow I^{\ov{p}} \gC_* (X;G )$
is a chain map inducing an isomorphism in  homology.
\end{theorem}

After the description and the study of these two complexes, comes the second part of this work,
the research of a topological invariance.
To this end, we consider the method used by H.~King \cite{MR800845} 
in the framework of CS sets.
Following a process  credited to D. Sullivan,
King associates to any CS set, $X$,
a new CS set,  $X^*$, with the same underlying topological space but endowed with a different stratification.
The thrust in the construction of $X^*$ is the elimination of the strata of $X$ that are not  
topologically singular. %
For example, let us consider a sphere having a point $P$ as  singular stratum.
The point $P$ and any other point of the sphere have homeomorphic neighborhoods. 
So, the point $P$ has no  singular 
character relatively to the other points. 
In this case, the associated CS set $X^*$ is the sphere with just one (regular) stratum.

Let us come back to the general case. As two CS sets with the same underlying topological space have the same associated CS set,  we call $X^*$ an  \emph{intrinsic CS set.}
In the framework of a GM-perversity $\ov{p}$, the topological invariance means that 
the $\ov{p}$-intersection homologies of $X$ and  $X^*$ are isomorphic.

Therefore, we   address the issue of topological invariance from the following point of view:
for which general  perversities, do we have an isomorphism between the intersection homologies of $X$ and  $X^*$?
The first challenge arises from the perversities. 
In  the study of topological invariance with GM-perversities, we consider
 the \emph{same} perversity  on $X$ and $X^*$.
 An analogy of this situation is the case of a perversity defined on $X^*$
 together with the pull-back perversities on each CS set having $X^*$ as intrinsic CS set,
 see \remref{rem:invariance}.
 But, if we start from a perversity $\ov{p}$ on $X$, which is the natural situation, we need to construct a perversity
 on $X^*$.
 To that end, we introduce the notion of  K-perversity  
 which allows the construction of a perversity $\nu_{*}\ov{p}$ on $X^*$ from $\ov{p}$.
 The condition (C) of \defref{def:llperversite} is the conerstone for this construction.
 But we also need some other properties for getting sufficient conditions giving the following result,
 which covers also the well-known situation of GM-perversities.

\begin{theorem} 
Let $X$ be a CS set and $\ov{p}\colon \cS_{X}\to \Z$ be a K-perversity.
Then, the intrinsic aggregation $\nu \colon X \to X^*$ 
 induces an isomorphism
$H_{*}^{\ov{p}}(X;G) \cong  H_{*}^{\nu_{*}\ov{p}}(X^*;G)$.
\end{theorem}

With an extra condition on $\ov{p}$, the isomorphism can be extended to the tame $\ov{p}$-intersection homology.
Let us also notice the existence of examples justifying the choice of the properties defining a K-perversity,
see \remref{rem:yes}.

\smallskip
We now address the more general situation of two stratifications,
$(X,\cS)$ and $(X,\cT)$, on the same topological space, $X$.
We say that $(X,\cS)$ is a \emph{refinement} of $(X,\cT)$ if the strata of $\cT$ are union of strata of $\cS$. In other words,
we consider a pair of stratifications such that the identity map defines a stratified map,
$\kappa\colon (X,\cS)\to (X,\cT)$.
The previous situation of the intrinsic CS set appears as a particular case of a refinement.
If we start from a perversity $\ov{p}$ on $X$ and the pullback perversity $\kappa^*\ov{p}$,
\corref{refin} shows that $\kappa$ induces an isomorphism
$H_{*}^{\kappa^*\ov{p}}(X,\cS;G) \cong  H_{*}^{\ov{p}}(X,\cT;G)$.

As quoted by the referee, \thmref{thm:invariancegenerale} concerns properties of ``pushforward perversities'',
$\ov{p}$ and $\nu_{*}\ov{p}$,
in the case of the intrinsic aggregation, $\nu\colon X\to X^*$.
On the other hand, \corref{refin}
is a statement about ``pullback perversities'' for any refinement of a stratification, which thus includes
the case of $\nu$. Thus a natural question is the existence of a \thmref{thm:invariancegenerale} for 
any refinements.
Such result cannot be obtained in a short way and needs an independant work. It will be done
in a forthcoming work.

 \medskip
 The topological invariance of $\ov{p}$-intersection homology for general perversities has already been studied in
 two papers.
 
 -- In \cite{MR2209151}, G. Friedman considers pseudomanifolds (with pseudoboundary, possibly empty),
  with codimension-dependent perversities and tame intersection homology.
 At first, let us observe that the topological invariance cannot exist without restriction under this generality:
  for any perversity $\ov{p}$ with $\ov p > \ov t$,  
$\ov{p}$-intersection homology is equal to the relative intersection homology of $X$ and its singular part (see \propref{pro:PerDir}).
Friedman proves that this phenomenon is in fact the only obstruction. He shows that
the Deligne sheaf is independent of the choice of a stratification if the singular set is fixed.
In our framework, this result corresponds to \corref{cor:greg}.
 
 -- In \cite{MR3175250}, G. Valette considers PL-pseudomanifolds (with pseudoboundary, possibly empty),
 with perversities defined on the sets of strata but with the restriction $\ov{p}\leq \ov{t}$. Let
  $(X,\cS)$ be a \emph{refinement} of $(X,\cT)$ (see \defref{def:refinement}), 
  $\ov{p}$ a perversity on $\cS$ and $\ov{q}$ a perversity on $\cT$,
  such that
  $\ov{q}(T)\leq \ov{p}(S)\leq \ov{q}(T)+\codim_{T}S$,
  for any $S\in \cS$, $T\in\cT$, $S\subset T$. Then, Valette proves the existence of an isomorphism
  $$H_{*}^{\ov{p}}(X,\cS;G) \cong  H_{*}^{\ov{q}}(X,\cT;G).$$
The previous question concerning the existence of a generalization of  \thmref{thm:invariancegenerale} to 
any refinement corresponds to this setting, without the PL hypothesis and the restriction $\ov{p}\leq \ov{t}$.

\medskip
The homologies are with coefficients in an abelian group~$G$. In general, we do not explicit them in the proofs.

\tableofcontents

\setcounter{theorem}{0}
\section{Stratified spaces.}\label{sec:rappel}

\begin{quote}
In this section we recall the notions of  stratified spaces, CS sets and pseudomanifolds.
\end{quote}

\begin{definition}\label{def:espacefiltre} 
A \emph{filtered space} is a Hausdorff space,
$X$, endowed with a filtration by closed subspaces,
$$X_0\subseteq X_1\subseteq\ldots\subseteq X_n=X,$$
with  $X_n\backslash X_{n-1}\ne \emptyset$. The
\emph{dimension} of $X$ is denoted by $\dim X=n$. 
The connected components, $S$, of $X_{i}\backslash X_{i-1}$ are the \emph{strata} of  $X$ 
and we write $\dim S=i$ and $\codim S=\dim X-\dim S$. 
(These dimensions can be formal and not necessarily related to a notion of geometrical dimension.)

The strata of  $X_n\backslash X_{n-1}$ are \emph{regular strata}; the other ones are \emph{singular strata}.  
The family of non-empty strata is denoted by $\cS_{X}$ (or $\cS$ if there is no ambiguity). 
The subspace $\Sigma _{X}=X_{n-1}$ is  the \emph{singular set,} sometimes also denoted $\Sigma$.
\end{definition} 

\begin{exemple}\label{exem:espacefiltre}
Let $X$ be a filtered space of dimension $n$.
\begin{itemize}
\item An open subset $U \subset X$, endowed with the \emph{induced filtration} given by
$U_i = U \cap X_{i}$, becomes a filtered space.

\item If $M$ is a topological manifold, the \emph{product filtration} is defined by
$\left(M \times X\right) _i = M \times X_{i}$. The product  $M\times X$ becomes a filtered space.
\item If $X$ is compact, the open cone 
$\rc X = X \times [0,1[ \big/ X \times \{ 0 \}$ is endowed with the 
\emph{conical filtration} defined by
$\left(\rc X\right) _i =\rc X_{i-1}$,  $0\leq i\leq n+1$. By convention, 
$\rc \,\emptyset=\{ \tv \}$, where $\tv=[-,0]$ is the apex of the cone.
\end{itemize}
\end{exemple}

We enhance the notion of filtered space in order to obtain continuous maps with richer properties.

\begin{definition}\label{def:espacestratifie}
A \emph{stratified space} is a filtered space such that any pair of strata, $S$ and $S'$ with $S\cap \overline{S'}\neq \emptyset$,  verifies
$S\subset \ov{S'}$.
\end{definition}

The family $\cS$ is a poset  (\cite[Section A.2]{CST1}) or  (\cite[Section 2.2.1]{FriedmanBook})  relatively to  
$S \preceq S'$ if $S\subseteq \ov{S}'$. 
We write $S\prec S'$ if  $S\preceq S'$ and $S\neq S'$.

\begin{definition}\label{def:profondeur}
Let $X$ be a stratified space. 
 The \emph{depth of $X$}    is the greater integer $m$ for which it exists a chain of strata,
 $S_{0}\prec S_{1}\prec \cdots\prec S_{m} $.
 We denote  $\depth X=m$.
\end{definition}

\begin{exemple}\label{exem:espacestrtaifie} 
If  $X$ is stratified, each construction of \exemref{exem:espacefiltre} is a stratified space.
\end{exemple}

\begin{definition}\label{def:applistratifiee}
A \emph{stratified map,} $f\colon X\to Y$, is a continuous map between stratified spaces such that, for each stratum  $S\in\cS_{X}$,
there exists a unique stratum $\bi S f\in \cS_Y$ with $f(S)\subset \bi S f$ and
$\codim \bi S f\leq \codim S$.
\end{definition}

Observe that a continuous map $f\colon X\to Y$ is  stratified  if, and only if, the pull-back of a stratum 
  $S' \in \cS_Y$ is empty or a union 
$f^{-1}(S')=\sqcup_{i\in I}S_{i}$, with
$\codim S'\leq \codim S_{i}$ for each $i\in I$.
Therefore, a stratified map sends a regular stratum in a regular one but  the image of a singular stratum 
can be included in a regular one.

\begin{exemple}
Let $X$ be a stratified space.
The canonical projection,
$\pr\colon M\times X\to X$, 
 the maps
$\iota_{t}\colon X\to\rc X$ with $x\mapsto [x,t]$,
$\iota_{m}\colon X\to M\times X$ with $x\mapsto (m,x)$
and the canonical injection of an open subset  $U\hookrightarrow X$,
are stratified for the structures described in \exemref{exem:espacefiltre}.
\end{exemple}

Let us recall some properties of stratified maps from \cite[Section A.2]{CST1}.

\begin{proposition}[{\cite[Proposition A.23]{CST1}}]\label{prop:stratifieetordre}
A stratified map, $f\colon X\to Y$, induces the order preserving map
$(\cS_X,\preceq)\to (\cS_{Y},\preceq)$, defined by $S\mapsto \bi S f$.
\end{proposition}

Let us introduce the notion of homotopy between stratified maps. 
Here, the product $X\times [0,1]$ is endowed with the product filtration.

\begin{definition}\label{def:homotopie}
Two stratified maps 
$f,\,g\colon X\to Y$
are \emph{homotopic} if there exists a stratified map,  
$\varphi\colon X\times [0,1]\to Y$,
such that  $\varphi(-,0)=f$ and $\varphi(-,1)=g$.
Homotopy is an equivalence relation and produces the notion of homotopy equivalence between stratified spaces.
\end{definition}

The following notion of locally cone-like stratified space has been introduced by  Siebenman, \cite{Sieb}.

\begin{definition}\label{def:csset}
A \emph{CS set} of dimension $n$ is a filtered space,
$$
\emptyset\subset X_0 \subseteq X_1 \subseteq \cdots \subseteq X_{n-2} \subseteq X_{n-1} \subsetneqq X_n =X,
$$
such that, for each $i$, 
$X_i\backslash X_{i-1}$ is a topological manifold of dimension $i$ or the empty set. 
Moreover, for each point $x \in X_i \backslash X_{i-1}$, $i\neq n$, there exist
\begin{enumerate}[(i)]
\item an open neighborhood $V$ of  $x$ in $X$, endowed with the induced filtration,
\item an open neighborhood $U$ of $x$ in $X_i\backslash X_{i-1}$, 
\item a compact filtered space, $L$,
of dimension $n-i-1$, where the open cone, $\rc L$, is provided with the {\em conical filtration}, $(\rc L)_{j}=\rc L_{j-1}$.
\item a homeomorphism, $\varphi \colon U \times \rc L\to V$, 
such that
\begin{enumerate}[(a)]
\item $\varphi(u,\tv)=u$, for each $u\in U$, 
\item $\varphi(U\times \rc L_{j})=V\cap X_{i+j+1}$, for each $j\in \{0,\ldots,n-i-1\}$.
\end{enumerate}
\end{enumerate}
The pair  $(V,\varphi)$ is a   \emph{conical chart} of $x$
 and the filtered space $L$ is a \emph{link} of $x$. 
The CS set $X$ is called  \emph{normal} if its links are connected.
\end{definition}

In the above definition, the links are always non-empty sets.
Therefore, the open subset $ X_ {n} \backslash X_ {n-1} $ is dense.
Links are not necessarily CS sets but they are always filtered spaces.
Note also that the links associated to points living in the same stratum may not be  homeomorphic
but they always have the same intersection homology, see for example \cite [Corollary 5.3.14] {FriedmanBook}.
An open subset of the CS set $X$ and the product of $X$ with a topological manifold, 
endowed with the induced structures of \exemref{exem:espacefiltre},  are CS sets. This is also the case for the cone 
$\rc X$ when $X$ is compact.
Recall also that a CS set is a stratified space, see \cite[Theorem G]{CST1} for instance.

\begin{proposition}\label{prop:paracompact}
A paracompact CS set is metrizable.
\end{proposition}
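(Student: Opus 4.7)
The plan is to deduce the result from Smirnov's metrization theorem: a Hausdorff paracompact space is metrizable if and only if it is locally metrizable. Since $X$ is paracompact by hypothesis and Hausdorff by the very definition of an espace filtré, everything reduces to exhibiting, for each point $x \in X$, an open neighborhood that is metrizable.

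For $x \in X_n \setminus X_{n-1}$ a regular point, the underlying topological manifold structure provides a Euclidean, hence metrizable, chart. For $x \in X_i \setminus X_{i-1}$ with $i < n$, \defref{def:CSset} supplies a cone chart $\varphi \colon U \times \rc L \to V$, in which $U$ is an open subset of the manifold $X_i \setminus X_{i-1}$ and $L$ is a compact filtered space of formal dimension $n - i - 1$. Since $U$ is locally Euclidean and products of two metrizable spaces are metrizable, the metrizability of $V$ reduces to that of $\rc L$, which in turn, via the embedding of $L$ as the closed slice $L \times \{1/2\} \subset \rc L$, reduces to the metrizability of the link $L$ itself.

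To handle $L$, I would proceed by induction on the formal dimension $n = \dim X$. The case $n = 0$ is immediate, since $X$ is then a $0$-dimensional manifold, hence discrete. For the inductive step, although $L$ is not a priori a CS-ensemble, its strata inherit a topological manifold structure imposed by the CS-ensemble condition on $V$ together with the product form of the chart. Applying the cone charts of $X$ at points of the slice $\{u_0\} \times L \times \{1/2\}$ and pulling them back through $\varphi^{-1}$, one obtains, after splitting off a Euclidean factor, cone neighborhoods for each point of $L$, thereby endowing $L$ with a CS-ensemble structure of formal dimension $n - i - 1 < n$. Since $L$ is compact, hence paracompact, the inductive hypothesis yields its metrizability, whence that of $\rc L$ and of $V$. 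A final application of Smirnov's theorem then completes the argument.

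The main obstacle is precisely the promotion of the link $L$ from a compact filtered space to a bona fide CS-ensemble: one must verify that the ambient cone charts of $X$, when restricted transversally to the slice $L \times \{1/2\}$, give rise to cone charts intrinsic to $L$. This local decomposition of charts, dividing out the extraneous Euclidean factor coming from $U$ and from the radial direction of the cone, is the technical heart of the argument and the only point where the CS-ensemble structure (rather than just local compactness of $X$) is genuinely used.
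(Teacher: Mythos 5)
Your overall strategy (Smirnov's theorem, reduction to local metrizability, analysis of cone charts, induction) is the same as the paper's, but there is a genuine gap at precisely the point you flag as the ``technical heart'' of the argument: you need the link $L$ to be a CS-ensemble in order to apply the inductive hypothesis, and this is false in general. The paper states this explicitly right after \defref{def:CSset}: \emph{``Bien que n'\'etant g\'en\'eralement pas des CS-ensembles, les liens sont toujours des espaces filtr\'es.''} Your proposed promotion of $L$ to a CS-ensemble by ``dividing out the extraneous Euclidean factor'' from the ambient cone charts is exactly the step that can fail: there is no reason the ambient cone chart at a point of $L\times\{1/2\}$ should split compatibly with the product $U\times L\times\,]0,1[$, and this is a well-known obstruction in the theory.

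The paper's proof circumvents this obstacle in a way worth learning. It inducts on the \emph{depth} of the CS-ensemble (not on the formal dimension), and instead of trying to give $L$ a CS-ensemble structure, it observes that
$U\times \rc L\setminus (U\times\{\tw\})\;\cong\; U\times L\times\,]0,\infty[$
is an \emph{open subset} of $X$, hence automatically a CS-ensemble, and moreover one of strictly smaller depth. This open set is locally compact, $\sigma$-compact and Hausdorff, hence paracompact, so the inductive hypothesis makes it metrizable. From there one deduces the metrizability of $U\times L\times [a,b]$ for $a>0$, hence of $U\times L\times[0,1]$ by rescaling, hence of $B\times L\times[0,1]$ for $B\subset U$ a smaller closed ball (a compact metrizable space), hence of its continuous image $B\times\rc L$ in the Hausdorff space $X$, and finally of $U\times\rc L$ by homeomorphism. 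This chain of deductions is what replaces your direct (and unavailable) claim that $L$ itself is a CS-ensemble. You should reorganize your induction around depth and around this open complement rather than around $L$.
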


\begin{proof}
Let  $X$ be a paracompact CS set. 
From a Smirnov theorem  \cite{MR0041420}, with the hypothesis ``paracompact'', it is sufficient 
to prove that $X$ is locally metrizable.
We proceed by induction on the depth of $X$. When $\depth(X)=0$, it is a consequence of the fact 
that $X$ is a topological manifold.

Let us suppose that the result is true for any paracompact CS set whose depth is  smaller than $\depth (X)$. 
Since the studied property is local, we can suppose that $X$ is a conical chart $U \times \rc L$,
with $U = \R^n$.
Let us notice that 
$U\times \rc L\backslash (U\times \{\tv\})=U\times L\times ]0,1[$
is an open subset of $X$ and therefore a  CS set of depth equal to  $\depth(X) -1$.
The product $U\times ]0,1[$ is metrizable, as subset of $\R^{n+1}$, thus it is paracompact
(\cite[Theorem 20.9]{Wil}).
Now, the product of a paracompact space with a Hausdorff compact space being paracompact
(\cite[Theorem 20.12.c]{Wil}), the product $U\times L\times ]0,1[$
is a paracompact CS set and therefore a metrizable space by induction hypothesis.

Hence the product $U\times L \times [1/3,2/3]$ is metrizable. 
The homeomorphic space $U\times L \times [0,1]$ is also metrizable.
So, the  product $ [-1,1]^n\times L\times [0,1]$ is a compact metrizable space. 
Since the image by a continuous map of a compact metrizable space in a Hausdorff space
 is a metrizable space (\cite[Corollary 23.2]{Wil}), 
 we get that $ [-1,1]^n\times (L\times [0,1]/L \times \{0\})$ is a metrizable space. 
 We conclude that  $]-1,1[^n\times \rc L$ is  metrizable.
Thus, the homeomorphic space $U\times \rc L $ is also metrizable.
\end{proof}

Pseudomanifolds are special cases of CS sets. Their definition varies in the literature;
we consider here the original definition of Goresky and MacPherson \cite{GM1}, 
without restriction on codimension one strata.

\begin{definition}\label{def:pseudomanifold}
An \emph{$n$-dimensional  pseudomanifold } (or simply pseudomanifold) is an $n$-dimensional CS set, 
for which  the link of a point $x \in X_i\menos X_{i-1}$ is an $(n-i-1)$-dimensional pseudomanifold.
\end{definition}

\section{Perversities}\label{subsec:perversite}

\begin{quote}
The main ingredient in intersection homology is the notion of perversity
which ``controls'' the transverse degree between simplices and strata.
Here we recall the various aspects of this notion.
\end{quote}

Let us begin with the original definition of perversity \cite{GM1}, 
called \emph{Goresky-Mac\-Pherson perversity,} to make a difference with the more general ones introduced in \cite{MacPherson90},
that we  simply call \emph{perversities}.

\begin{definition}\label{def:perversite} 
A \emph{loose perversity}  \cite{MR800845} is a map  $\ov{p}\colon \N\to\Z$, $i\mapsto \ov{p}(i)$, with $\ov{p}(0)=0$. 
A  \emph{Goresky-MacPherson perversity} is a loose perversity such that
\begin{equation}\label{equa:king}
\ov{p}(i)\leq\ov{p}(i+1)\leq\ov{p}(i)+1, \quad\text{for each} \quad i\geq 1,
\end{equation}
and $\ov{p}(1)=\ov{p}(2)=0$.
A  \emph{King perversity} is a loose perversity verifying only (\ref{equa:king}).
\end{definition}

In \cite{MR800845}, H. King observed that the condition (\ref{equa:king}) 
is necessary for the topological invariance
of intersection homology and 
that  (\ref{equa:king}) together  with $\ov{p}(1)\geq 0$ are sufficient. This
property being the main subject of this work, we particularize these perversities in the previous definition. 
Observe also that they have already been considered in the literature: the
super-perversities of \cite{MR2276609} are  King perversities verifying $\ov{p}(2)> 0$;
they cover the case $\ov{p}(2)=1$ of \cite{MR1127478}.

\medskip
Let us introduce  more general perversities. Unlike the previous ones,  they are defined on the set
of non-empty strata of the filtered space.

\begin{definition}\label{def:perversitegen}
A \emph{perversity on a filtered space} $X$, is a map,
$\ov{p}\colon \cS\to \Z$, taking the value~0 on the regular strata.
The pair $(X,\ov{p})$ is a  \emph{perverse space}.

When $X$ is a stratified space, or a CS set, we  say that $(X,\ov{p})$ is a  \emph{perverse stratified space} or
a \emph{perverse CS set.}
\end{definition}

Let  $X$ be a filtered space. A loose perversity in the sense of \defref{def:perversite} induces a perversity on $X$ by 
setting
$\ov{p}(S)=\ov{p}(\codim S)$.

If $\ov{p}$ and $\ov{q}$ are two perversities on $X$, we write  $\ov{p}\leq \ov{q}$ when
$\ov{p}(S)\leq\ov{q}(S)$, for each $S\in \cS$. 
The poset of Goresky-MacPherson perversities possesses a maximal element: the \emph{top perversity} $\ov t$, defined by 
 $\ov{t}(i)=i-2$, if $i\geq 2$.  By extension, we shall denote by $\ov t$ the perversity on $X$ 
 defined by $\ov{t}(S)=\codim S-2$, if $S$ is a singular stratum. 
 Given a perversity $\ov p$ on $X$,  the \emph{complementary perversity} on $X$, $D\ov{p}$, is characterized 
 by $D\ov{p}+\ov{p}=\ov{t}$.
 Mention also the \emph{null perversity}, $\ov{0}$, defined by $\ov{0}(S)=0$.

\begin{definition}\label{def:perversiteimagereciproque}
Let $f\colon X\to Y$ be a stratified map and $\ov{q}$ a perversity on $Y$.
 The  \emph{pull-back perversity of $\ov{q}$ by $f$} is the perversity $f^*\ov q$ on $X$ defined by
$(f^*\ov{q})(S)=\ov{q}(\bi S f)$,
for each $S\in \cS_{X}$.
\end{definition}

\begin{exemple}\label{exem:perversinduits}
Let $(X,\ov{p})$ be a perverse stratified space and consider the constructions of \exemref{exem:espacefiltre}.
\begin{itemize}
\item Any  open subset $U \subset X$ is endowed with the pull-back perversity of $\ov{p}$ by the canonical inclusion $U \hookrightarrow X$.
\item The  product with a topological manifold, $X\times M$, is endowed with
the pull-back perversity of $\ov{p}$ by the canonical projection $X\times M\to X$. We also denote it $\ov{p}$.
\item When $X$ is compact, we have $\cS_{\rc X}=\left\{ S\times ]0,1[\mid S\in \cS_{X}\right\}\cup\left\{\{\tv\}\right\}$.
So, a perversity $\ov{q}$ on   $\rc X$ induces a perversity on $X$ defined by
$\ov{q}(S)=\ov{q}(S\times ]0,1[)$. We also denote it  $\ov{q}$.
\end{itemize}
\end{exemple}

\begin{remarque}\label{rem:pourquoigenerales}
Let us consider the following classical result: free actions of the circle  on a topological space,
 $X$, are classified by the orbit space, $B=X/ S^1$, and the Euler class $e\in H^2(B;\Z)$.
 An extension of this result to the case of not necessarily free actions has been developed in 
 \cite{MR2344739, MR3183107,MR2164356} in the case of a modeled action (see \cite{MR2344739})
  and of pseudomanifolds,  $X$ and $B$. 
In  \cite{MR2344739}, a perversity $\ov{e}$ and an Euler class $e$,
belonging to the intersection cohomology group $H_{\ov{e}}^2(B;\R)$, are defined. As in the case of a free action, 
the pseudomanifold  $B$ and the Euler class $e$  determine the intersection cohomology of $X$.
The Euler perversity, $\ov{e}$, is not a loose perversity. It takes the values  0, 1 or 2,
following the behavior of the circle on the stratum:
\begin{itemize}
\item $\ov{e}(S)= 0$ if the stratum $S$ has not fixed points,
\item $\ov{e}(S)= 1$ if the stratum  $S$  has only fixed points  and  the circle   acts trivially on the link $L_{S}$ of $S$,
\item $\ov{e}(S)= 2$ on the other strata.
\end{itemize}
This example shows the interest for  more general perversities than the classical ones.
\end{remarque}

\section{Intersection homologies.  Theorems \ref{thm:martinmacphersonhomologie} and \ref{thm:martingreghomologie}}
\label{subsec:homologie}

\begin{quote}
We present two chain complexes computing the intersection homology of Goresky and MacPherson \cite{GM1}.  The first one  comes from the work of King 
\cite{ MR800845}. The second one was already present  in several works of the second author, cf.~\cite{MR2210257} for example.
We also give a tame version of these homologies.
\end{quote}

Let us begin with the notion of allowable simplex.

\begin{definition}\label{def:allowableMacPherson}
Let   $(X, \ov{p})$ be a perverse space. A singular simplex  $\sigma\colon \Delta\to X$, is \emph{${\ov{p}}$-allowable} if, for each stratum $S\in \cS$, the subset $\sigma^{-1}(S)$ is included in the skeleton of $\Delta$ of dimension,
\begin{equation}\label{equa:allowable}
\dim \Delta-\codim S+\ov{p}(S).
\end{equation}
A singular chain $\xi$ is a \emph{$\ov p$-allowable chain} if any simplex with a non-zero coefficient in $\xi$ is 
$\ov p$-allowable. It is a \emph{$\ov p$-intersection chain} if $\xi$ and $\partial \xi$ are $\ov p$-allowable chains.
Denote by  $I^{\ov{p}}C_{*}(X;G)$ the chain complex of $\ov p$-intersection chains and by 
$I^{\ov{p}}H_{*}(X;G)$ its homology.
\end{definition}

Let us observe that the condition \eqref{equa:allowable} is satisfied for any regular stratum since
$\codim S=\ov{p}(S)=0$.
In \cite{MR800845}, King
 proved that  $I^{\ov{p}}H_{*}(X;G)$ is isomorphic to the original intersection homology of Goresky and MacPherson introduced in \cite{GM1}.

\medskip
Let us introduce the filtered simplices, more adapted to a simplicial approach of intersection homology.

\begin{definition}\label{def:filteredsimplex}
Let $X$ be a filtered space of dimension $n$.  
A  \emph{filtered simplex} is a singular simplex  $\sigma\colon\Delta\to X$ endowed with a filtration
 $\Delta=\Delta_{0}\ast\Delta_{1}\ast\cdots\ast\Delta_{n}$, called  \emph{$\sigma$-decomposition of $\Delta$},
verifying$$
\sigma^{-1}X_{i} = \Delta_{0}\ast\Delta_{1}\ast\cdots\ast\Delta_{i},
$$
for each~$i \in \{0, \ldots, n\}$.  Here, $*$ denotes the join of two simplices, with the convention $\emptyset * Y=Y$,
and we require that the join decomposition,
 $\Delta=\Delta_{0}\ast\Delta_{1}\ast\cdots\ast\Delta_{n}$, 
 is compatible with the order of vertices.
\end{definition}

In other words, a singular simplex $\sigma \colon \Delta \to X$ is a filtered simplex if  $\sigma^{-1}(X_i)$, 
 $i\in \{0,\ldots,n\}$, is a face of $\Delta$ or the emptyset.
 The dimension of the simplices $\Delta_{i}$ measures the defect of transversality 
 of $\sigma$ relatively to the strata of $X$.

\begin{definition}\label{def:perversedegree}
Let $X$ be a filtered space of dimension $n$ and  
$\sigma\colon\Delta=\Delta_{0}\ast \cdots\ast\Delta_{n}\to X$ be a filtered simplex.
\begin{enumerate}[{\rm (i)}]
\item The \emph{perverse degree of} $\sigma$ is the  $(n+1)$-tuple,
$\|\sigma\|=(\|\sigma\|_0,\ldots,\|\sigma\|_n)$,  
where
 $\|\sigma\|_{i}=\dim \sigma^{-1}X_{n-i}=\dim (\Delta_{0}\ast\cdots\ast\Delta_{n-i})$, 
 with the convention $\dim \emptyset=-\infty$.

 \item For each stratum  $S \in \cS$, the \emph{perverse degree of  $\sigma$ along the stratum  $S$} is 
 $$\|\sigma\|_{S}=\left\{
 \begin{array}{cl}
 -\infty,&\text{if } S\cap \sigma(\Delta)=\emptyset,\\
 \|\sigma\|_{\codim S},&\text{if not.}
  \end{array}\right.$$

  \item For each stratum  $S \in \cS$, the  \emph{perverse degree of the chain  
  $\xi=\sum_{j\in J}\lambda_{j}\sigma_{j}$, $\lambda_{j}\neq 0$,  along the stratum $S$} is
  $\|\xi\|_{S}=\max_{j\in J}\|\sigma_{j}\|_{S}.$
\end{enumerate}
  The map $\|\xi\|\colon \cS\to \N\cup\{-\infty\}$ is defined by $S  \mapsto \|\xi\|_{S}$. 
\end{definition}

\begin{remarque}\label{rem:allowablefiltre}
Let  $\sigma\colon \Delta\to X$ be a filtered simplex and   $S$ be a stratum of $X$.
If $\sigma^{-1}(S) \ne \emptyset$, we denote $F_{S}$ the smallest face of $\Delta$ 
containing $\sigma^{-1}(S)$.
Following \defref{def:allowableMacPherson}, the filtered simplex $\sigma$ is $\ov{p}$-allowable if, and only if, 
$\sigma^{-1}(S) = \emptyset$ or
$  \dim F_{S}\leq \dim \Delta - \codim S + \ov{p}(S)$, for each $S \in \cS$.

Denote $i=\codim S$. We  proved in  \cite[Lemme A.24]{CST1} that
  $
\sigma^{-1}(S)=\emptyset$ or
$
\sigma^{-1}(S) = \Delta_{0}* \cdots * \Delta_{n-i} \backslash \Delta_{0}* \cdots * \Delta_{n-i -1}
$.
In the second case we have $\dim F_{S}=\|\sigma\|_{i}=\|\sigma\|_{S}$ and therefore we get:
\begin{equation*}\label{equa:filtreallowable}
\sigma \text{ is } \ov{p}{\text{-allowable}} \Longleftrightarrow
\|\sigma\|_{S}\leq \dim \Delta - \codim S + \ov{p}(S), \ \ \forall S\in\cS.
\end{equation*}
\end{remarque}

 \begin{definition}\label{def:chaineintersection} 
Let  $(X, \ov{p})$ be a perverse space. We denote $C_{*}^{\ov{p}}(X;G)$ 
the chain complex of  $\ov{p}$-intersection chains made up of filtered simplices and
$H^{\ov{p}}_*(X;G)$ its homology.
\end{definition}

The next theorem establishes the existence of an isomorphism between the two complexes introduced above,
in \defref{def:allowableMacPherson} and \defref{def:chaineintersection}.

\begin{theorem}\label{thm:martinmacphersonhomologie}
Let $(X,\ov{p})$ be a perverse CS set.
The canonical inclusion 
$C_{*}^{\ov{p}}(X;G)\hookrightarrow I^{\ov{p}}C_{*}(X;G)$
is a chain map inducing an isomorphism in  homology, 
$$H^{\ov{p}}_*(X;G)\cong I^{\ov{p}}H_{*}(X;G).$$
\end{theorem}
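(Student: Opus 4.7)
The plan is to prove the isomorphism by the standard "local-to-global" strategy used in intersection homology: establish that both functors share the three properties (Mayer--Vietoris, homotopy invariance, cone formula) and then propagate the isomorphism from local charts to the whole CS-set by induction on depth. The inclusion $C_{*}^{\ov p}(X;G)\hookrightarrow I^{\ov p}C_{*}(X;G)$ is manifestly a chain map: every face of a filtered simplex is filtered, and the $\ov p$-admissibility of a filtered simplex is exactly the translation, via the formula in \remref{rem:admissiblefiltre}, of MacPherson's admissibility, so the boundary of a filtered $\ov p$-intersection chain lands in $C_{*}^{\ov p}$.

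First I would establish the three structural properties for the filtered theory $H^{\ov p}_{*}$, since they are classical for $I^{\ov p}H_{*}$. For Mayer--Vietoris, given an open cover $\{U,V\}$ by open (hence stratified) subspaces of $X$, I need a subdivision operator on filtered singular chains that is $\ov p$-admissible-preserving. The barycentric subdivision of a filtered Euclidean simplex $\Delta=\Delta_{0}*\cdots*\Delta_{n}$ naturally decomposes into join-blocks that are themselves filtered with the same join decomposition (up to reordering of vertices), so the iterated subdivision operator $\sd$ lifts to $C^{\ov p}_{*}(X;G)$, and the classical chain homotopy $T$ between $\sd$ and $\id$ can also be chosen filtered-valued. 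This yields the short exact sequence $0\to C^{\ov p}_{*}(U\cap V)\to C^{\ov p}_{*}(U)\oplus C^{\ov p}_{*}(V)\to C^{\ov p,\{U,V\}}_{*}(U\cup V)\to 0$ and hence Mayer--Vietoris. Homotopy invariance under stratified homotopies $\varphi\colon X\times[0,1]\to Y$ follows from the usual prism construction, once one checks that the prism operator sends filtered admissible simplices of $X$ to filtered admissible chains of $Y$; this is essentially a computation with the join decomposition, using that the projection $X\times[0,1]\to X$ is stratified and preserves codimensions.

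Next I would prove the cone formula: for a compact filtered $L$ of formal dimension $n-1$ and the conical filtration on $\rc L$, the filtered intersection homology $H^{\ov p}_{k}(\rc L;G)$ equals $H^{\ov p}_{k}(L;G)$ for $k\leq n-1-\ov p(\tw)-1$ and vanishes otherwise, where $\tw$ denotes the apex stratum. The "$\leq$" direction uses the cone $c\xi$ of a filtered admissible cycle (the cone of a filtered simplex is filtered, with the apex adjoined to $\Delta_{n}$); the vanishing in the complementary range uses the same cone operator as a null-homotopy, after checking that the admissibility condition along the apex strata \emph{forces} the degree to fall in the stated range. This computation matches exactly the one known for $I^{\ov p}H_{*}$, so the inclusion induces an isomorphism on $\rc L$ whenever it does on $L$.

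Finally I would run an induction on the depth of $X$. For depth $0$ the space is a topological manifold and both complexes reduce to ordinary singular chains, so the inclusion is the identity in homology. For the inductive step, I cover $X$ by distinguished conical charts $V\cong U\times \rc L$ (where $L$ has strictly smaller depth) together with the open regular stratum, apply the inductive hypothesis on $L$ and therefore on $\rc L$ (via the cone formula) and on $U\times \rc L$ (via homotopy invariance and the product with a manifold), and assemble global isomorphisms with the Mayer--Vietoris/colimit five-lemma argument, passing to arbitrary open subsets via the Bredon-type colimit of finite subcovers (legitimate thanks to the paracompactness and metrisability of CS-sets, \propref{prop:paracompact}). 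The main obstacle is the first step: constructing a filtered-preserving subdivision-with-chain-homotopy on $C^{\ov p}_{*}$, because the naive barycentric subdivision can destroy the join decomposition of the source simplex. I expect to handle it by using an \emph{iterated} subdivision adapted to the join structure, refining the arguments already developed in the second author's earlier papers such as \cite{MR2210257}, where the same filtered-simplex technology is controlled.
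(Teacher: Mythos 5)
Your overall strategy is the same as the paper's: Mayer--Vietoris, homotopy invariance and the cone formula for the filtered complex, then a bootstrap over the conical charts of the CS-set (the paper packages the bootstrap as Friedman's criterion, \thmref{thm:gregtransformationnaturelle}, rather than a hand-made induction on depth; that difference is inessential). But there is a genuine gap at the Mayer--Vietoris step, and you have located the difficulty in the wrong place. An admissibility-preserving subdivision operator $\sd$ with chain homotopy $T$ (your ``main obstacle'') is the easy part: it is already available from the authors' earlier work and only needs the perverse-degree inequality $\|\sigma\circ\zeta\|_{i}\leq\|\sigma\|_{i}$. What it gives is that $\sd$ is chain homotopic to the identity on $C_*^{\ov p}(X;G)$ and that $\sd^k\xi$ is again a $\ov p$-intersection chain whose simplices each lie in $U$ or in $V$. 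It does \emph{not} give that $\sd^k\xi$ splits as $\xi_U+\xi_V$ with $\xi_U\in C_*^{\ov p}(U)$ and $\xi_V\in C_*^{\ov p}(V)$: the obvious splitting produces chains whose simplices are $\ov p$-admissible but whose boundaries need not be, because for a general perversity the faces of an admissible simplex are not admissible, and the non-admissible faces occurring in $\partial\xi_U$ are only cancelled against those of $\partial\xi_V$. So your short exact sequence does not yet yield Mayer--Vietoris for $H_*^{\ov p}$; one still has to prove that the inclusion $C_*^{\ov p}(U)+C_*^{\ov p}(V)\hookrightarrow C_*^{\ov p}(X)$ is a quasi-isomorphism, i.e. property (\ref{equa:subdivisionetintersection}). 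That is where the bulk of the paper's work lies: the $\ov p$-funeste face and $\ov p$-coupable stratum (\defref{def:funestefatal}, \propref{prop:badface}), the control of the $\ov p$-default under subdivision (\propref{prop:elle2}), and the induction on the $\ov p$-default in the second step of the proof of \propref{prop:MVhomologie}. Nothing in your proposal addresses this cancellation problem, and for perversities subject to no growth condition it cannot be bypassed.

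Two lesser points. Your cone formula is misstated in the range $k\geq \dim L+1-\ov{p}(\{\tw\})$, where you claim vanishing: for the filtered intersection homology the correct statement (\propref{prop:homologiecone}) is that in this range $H_k^{\ov p}(\rc L;G)$ agrees with the ordinary homology of the cone, hence equals $G$ in degree $0$; the vanishing you assert is the cone formula of the tame theory $\gH_*^{\ov p}$ (\propref{prop:homologieconeAdr}). Since $I^{\ov p}H_*$ has the same degree-$0$ value the comparison still goes through, but the local step of your induction must be run with the corrected formula. Finally, the globalisation should not invoke paracompactness or metrisability, which are not assumed in the theorem; the standard mechanism is the increasing-union condition (b) of \thmref{thm:gregtransformationnaturelle}.
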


For Goresky-MacPherson perversities this result is \cite[Proposition A.29]{CST1}.
We find in \cite[Proposition 2.4.1]{MR2210257} a slightly different version using perversities.
There, a blow-up of $X$ is used and this process needs extra conditions on the space $X$. (For example, 
the blow-up of $X$ exists if  $X$ is a Thom-Mather space.)
We give in  \secref{sec:thmA} a proof of  \thmref{thm:martinmacphersonhomologie}, 
without this restriction.

\bigskip

We introduce now the \emph{tame intersection homology.} 
When we are dealing with perversities such that $\ov{p}\not\leq\ov{t}$, one needs 
this variation for having
a deRham Theorem \cite{MR2210257} or  a Poincar\'e Duality \cite{CST2,FM}.
More precisely, a perversity $\ov{p}$ with $\ov{p}\leq\ov{t}$ has an essential property coming from
(\ref{equa:allowable}):
 any $\ov p$-allowable simplex $\sigma \colon \Delta\to X$ verifies  
$\sigma(\partial\Delta)\not\subset \Sigma$. But if $\ov{p}\not\leq\ov{t}$ this kind of simplices may exist 
with the consequence that deRham Theorem and  Poincar\'e Duality fail. Tame intersection homology is defined 
from
the elimination of the allowable simplices included in the singular part.
Let us begin with the notion of tame simplex (called non-GM in \cite{FriedmanBook}).

\begin{definition}\label{def:tameGreg}
 Let $X$ be a filtered space. A singular simplex $\sigma  \colon \Delta  \to X$ is \emph{regular} 
 if  $\sigma(\Delta) \not\subset \Sigma$. 
 In the boundary  $\partial \sigma=\sum_{i=0}^m (-1)^i\sigma_{i}$, we keep only the regular simplices $\{\sigma_{i}\}_{i\in \gI}$ and  set
 $$\gd \sigma=\sum_{i\in\gI}(-1)^i\sigma_{i}.$$ 
Notice that  $\gd^2=0$.
The chain complex  
$I \gC_{*}(X;G)$ 
is the complex generated by the regular simplices of $X$ endowed with the differential 
$\gd$.
\end{definition}

And now come the perversities.

 \begin{definition}\label{def:homologiedroite}
  Let $(X,\ov p)$ be a perverse space. 
  A \emph{$\ov p$-tame simplex} is a $\ov p$-allowable regular simplex.
A chain  
$\xi \in I \gC_{*}(X;G)$ 
is a 
  \emph{$\ov p$-tame chain} if we can write it as a combination of $\ov p$-tame simplices. 
  A $\ov p$-tame chain  $\xi$ such that $\gd \xi$ is $\ov p$-tame is called a \emph{$\ov p$-tame intersection chain}. The complex of $\ov p$-tame intersection chains is denoted by 
  $I ^{\ov{p}}\gC_{*}(X;G)$. Its homology,
   $I ^{\ov{p}}\gH_{*}(X;G)$,
  is  the  \emph{$\ov{p}$-tame intersection  homology} of $X$.
 \end{definition}

We adapt this  definition to filtered simplices as follows.

 \begin{definition}\label{def:chaineintersectiontame} 
Let  $(X, \ov{p})$ be a perverse space. We denote 
$ \gC_*^{\ov p} (X;G )$ 
the chain complex of  $\ov{p}$-tame intersection chains made up of filtered simplices. Its homology is 
 $ \gH_*^{\ov p} (X;G )$ 
\end{definition}

\begin{remarque}\label{igual}
Condition \eqref{equa:allowable} implies that the $\partial$-boundary of a $\ov p$-allowable simplex does not live in $\Sigma$, when $\ov p \leq \ov t$. So, in this case, we have 
$I^{\ov{p}} \gC_* (X;G )=I^{\ov{p}} C_*(X;G )$,
$ \gC _*^{\ov p} (X;G )= C_*^{\ov p} (X;G )$,
$I^{\ov{p}} \gH_* (X;G )=I^{\ov{p}} H_*(X;G )$
and
$ \gH_*^{\ov p} (X;G )= H_*^{\ov p} (X;G )$
\end{remarque}

This homology is isomorphic to the non-GM intersection homology of Friedman~\cite{FriedmanBook}.

\begin{theorem}\label{thm:martingreghomologie}
Let $(X,\ov{p})$ be a perverse CS set.
The canonical inclusion 
$ \gC _*^{\ov p} (X;G )\hookrightarrow I^{\ov{p}} \gC_* (X;G )$
is a chain map inducing an isomorphism in  homology,
$$ \gH_*^{\ov p} (X;G )\cong I^{\ov{p}} \gH_* (X;G ).$$
\end{theorem}

The proof of  \thmref{thm:martingreghomologie} is postpone to \secref{sec:thmA}. 
We end this Section with some properties of stratified maps relatively to 
$H_*^{\ov p} (X;G )$ and $\gH_*^{\ov p} (X;G )$.
We need the following notion introduced in \cite{MR2210257}.

\begin{definition}\label{def:stratifieesubtil}
Let $(X,\ov p)$ be a perverse CS set. We set:
$$X_{\ov{p}}= \cup \{ \ov{S} \mid S \in \cS \text{ singular stratum with $\ov p (S) > \ov t (S)$} \}.$$ 
\end{definition}

 \begin{proposition}\label{prop:stratifieethomotopie}
Let $f\colon (X,\ov{p})\to (Y,\ov{q})$ be  a stratified map between two perverse stratified spaces with  $f^*D\ov q \leq D\ov p$.
The association $\sigma\mapsto f\circ \sigma$ 
defines a chain map
$f_{*}\colon C_*^{\ov p} (X;G )\to C_*^{\ov q} (Y;G )$.
Moreover, if $f (X_{\ov p}) \subset \Sigma_Y$, then it induces also a chain map
$f_{*}\colon \gC_*^{\ov p} (X;G )\to \gC_*^{\ov q} (Y;G )$.
\end{proposition}

Since the map $f$ is stratified, we observe that the condition  $\ov p \leq f^*\ov q$ implies 
$f^*D\ov q \leq D\ov p$. We also remark that the condition $f(\Sigma_{X})\subset \Sigma_{Y}$
implies $f (X_{\ov p}) \subset \Sigma_Y$.

\begin{proof}
Let 
$\sigma\colon \Delta=\Delta_{0}\ast\cdots\ast\Delta_{n}\to X$
be a $\ov{p}$-allowable filtered simplex.
We proceed in two steps.

$\bullet$ \emph{The map $f_*$ preserves allowability;} i.e.,
\begin{equation*}\label{equa:imageallowable}
\sigma \  \ov{p}\text{-allowable filtered  simplex} \Rightarrow  f_*(\sigma) \  \ov{q}\text{-allowable filtered simplex}.
\end{equation*}
 By definition, the simplex $\sigma$ verifies 
\begin{equation}\label{equadansX}
\|\sigma\|_{S}\leq \dim \Delta -\codim S+\ov{p}(S),
\end{equation}
for each stratum $S \in \cS_{X}$ and we have to prove 
\begin{equation}\label{equadansY}
\|f\circ\sigma\|_{T}\leq \dim\Delta-\codim T+\ov{q}(T)
\end{equation}
for each singular stratum  $T$ of $Y$.

If $(f\circ \sigma)(\Delta)\cap T=\emptyset$ then $\|f\circ\sigma\|_{T}=-\infty$ and 
(\ref{equadansY}) is verified. We can suppose  $(f\circ \sigma)(\Delta)\cap T\neq\emptyset$.
We know, from
\cite[Lemma A.24]{CST1} and the
beginning of the proof of \cite[Theorem F]{CST1},
 that the strata $T$ of $Y$ verifying
$(f\circ \sigma)(\Delta)\cap T\neq\emptyset$
are totally ordered,
$T_{1} \prec T_{2} \prec \dots \prec T_{p}$
and that there exists a family of strata of $X$,
$$S_{1} \prec \dots \prec S_{k_{1}} \prec S_{k_{1}+1} \prec\dots \prec S_{k_{2}}\prec\dots\prec S_{k_{p}},$$
such that
$S^f_{k_{i}+j}=T_{i+1}$ for any $j\in \{1,\dots,k_{i+1}-k_{i}\}$.
We get
\begin{eqnarray*}
\|f\circ \sigma\|_{\codim T_{i}}
&=_{(1)}&
\dim (f\circ\sigma)^{-1} (Y_{m-\codim T_{i}})
=\dim (f\circ\sigma)^{-1} (Y_{\dim T_{i}})
\\
&=_{(2)}&
\dim (f\circ\sigma)^{-1}(T_{1}\sqcup\dots\sqcup T_{i})
=
\dim \sigma^{-1}(S_{1}\sqcup\dots\sqcup S_{k_{i}})
\\
&=&
\dim (\Delta_{0}\ast\dots\ast\Delta_{\dim S_{k_{i}}})\\
&=_{(3)}&
\|\sigma\|_{\codim S_{k_{i}}},
\end{eqnarray*}
where $=_{(1)}$ and $=_{(3)}$ are \defref{def:perversedegree},
$=_{(2)}$ is the formula (36) in the statement of \cite[Lemma A.24]{CST1}.
As $\sigma$ is $\ov{p}$-allowable, we deduce
$$\|f\circ \sigma\|_{\codim T_{i}}\leq \dim \Delta-\codim S_{k_{i}} +\ov{p}(S_{k_{i}}).$$
On the other hand, by hypothesis, we have
$f^*D\ov{q}(S_{k_{i}})\leq D\ov{p}(S_{k_{i}})$
which means, by definition of $f^*D\ov{q}$,
$D\ov{q}(T_{i})\leq D\ov{p}(S_{k_{i}})$.
From the expression of the complementary perversity, this last inequality can be written as
$$\codim T_{i}-2-\ov{q}(T_{i})
\leq
\codim S_{k_{i}}-2-\ov{p}(S_{k_{i}}),
$$
which gives
$$\dim \Delta-\codim S_{k_{i}}+\ov{p}(S_{k_{i}})
\leq
\dim \Delta-\codim T_{i}+\ov{q}(T_{i}).$$
Finally, the inequality (\ref{equadansY}) is proven for $T_{i}$.

$\bullet$ \emph{The operator $f_*$ is differential}.
We have $f_* (\partial\sigma) = \partial f_*(\sigma)$ and it suffices to prove  
$f_* (\gd\sigma) = \gd f_*(\sigma)$ in the case of a regular simplex 
$\sigma \colon \Delta=\Delta_{0}\ast\dots\ast\Delta_{n} \to X$.
Let us notice, 
 \begin{equation}\label{equa:dgd}
 f_{*}(\partial \sigma -\gd \sigma)= f_{*}(\partial\sigma)-f_{*}(\gd\sigma)= \partial f_{*}(\sigma)-f_{*}(\gd\sigma).
 \end{equation}
 If we prove 
 $f_*(\partial \sigma- \gd \sigma) \subset \Sigma_Y$,
 then the regular part of (\ref{equa:dgd}) is equal to zero, which means exactly
 $\gd f_*(\sigma) - f_{*}(\gd \sigma)=0$.
Let us see the inclusion $f_*(\partial \sigma- \gd \sigma) \subset \Sigma_Y$.

Since $f (X_{\ov{p}}) \subset \Sigma_Y$, we are reduced to establish
$\im \sigma' \subset X_{\ov{p}}$ for each codimension one non-regular face $\sigma' \colon \Delta' \to X$ of $\sigma$.
By dimension reasons, we have $\dim \Delta_n = 0$ and also there exists   $i<n$ with  $\Delta_{i}\neq\emptyset$.
We denote $j$ the largest integer $i$ with this property.
Let $S \in \cS_{X}$  be the singular stratum  with $n-j = \codim S$ and $S\cap \sigma(\Delta)\neq\emptyset$.
The $\ov p$-allowability of $\sigma$ gives 
$\|\sigma\|_{n-j}=\dim(\Delta_{0}\ast\cdots\ast\Delta_{j})=\dim \Delta -1\leq \dim \Delta - \codim S+\ov{p}(S)
$.
Hence $\ov t (S)  = \codim S -2  <  \ov p(S)$ and therefore  $S \subset \ib X {\ov p}$, which gives the result.
\end{proof}

\begin{remarque}\label{rem:singisenough}
In the previous proof, the strata $T_{i}$ and therefore $S_{j}$ are singular. So, it suffices to verify the inequality
$f^*D\ov q (S) \leq D\ov p(S) $ for singular strata.
\end{remarque}

\begin{proposition}\label{prop:homotopesethomologie}
Let $\varphi\colon (X\times [0,1],\ov{p})\to (Y,\ov{q})$ be 
a homotopy between two stratified maps 
$f,\,g\colon (X,\ov{p})\to (Y,\ov{q})$ with 
$\varphi^*D\ov q \leq D\ov p$. Then 
 $f$ and $g$
induce the same map in homology,
$f_{*}=g_{*}\colon H_*^{\ov p} (X;G )\to H_*^{\ov q} (Y;G )$.
Moreover, if $\varphi (X_{\ov p} \times [0,1]) \subset \Sigma_Y$ then we also have
$f_{*}=g_{*}\colon \gH_*^{\ov p} (X;G )\to \gH_*^{\ov q} (Y;G )$.
\end{proposition}

\begin{proof} 
Consider the canonical injections,
 $\iota_{0},\,\iota_{1}\colon X\to X\times [0,1]$,
  defined by $\iota_{k}(x)=(x,k)$ for $k=0,\,1$. 
They are stratified maps and verify 
$\iota_k (X_{\ov p}) \subset \Sigma_{X\times [0,1]}$ for $k=0,\,1$.
 From
\propref{prop:stratifieethomotopie}, we have the homomorphism
$\varphi_{*}\colon H_*^{\ov p} (X\times [0,1] )\to H_*^{\ov q} (Y )$
and also
$\varphi_{*}\colon \gH_*^{\ov p} (X\times [0,1] )\to \gH_*^{\ov q} (Y )$
when
$\varphi (X_{\ov p} \times [0,1]) \subset \Sigma_Y$
Since 
$f=\varphi\circ\iota_{0}$ and $g=\varphi\circ\iota_{1}$ then it suffices to prove 
$\iota_{0,*}=\iota_{1,*}\colon  H_*^{\ov p} (X )\to H_*^{\ov p} (X\times [0,1] )$
and
$\iota_{0,*}=\iota_{1,*}\colon  \gH_*^{\ov p} (X )\to \gH_*^{\ov p} (X\times [0,1] )$.

Let $\sigma\colon \Delta=\langle e_{0},\ldots,e_{m}\rangle\to X$ be a
 filtered simplex.
The vertices of $\Delta \times [0,1]$ are $a_{j}=(e_{j},0)$ and $b_{j}=(e_{j},1)$. We define an $(m +1)$-chain on   $\Delta \times [0,1]$ by
$P=\displaystyle \sum_{j=0}^m (-1)^j \langle a_{0},\ldots,a_{j},b_{j},\ldots,b_{m}\rangle.$
This gives a chain homotopy 
$h\colon
C_{*}(X)\to C_{*+1}(X\times [0,1])$,
between $\iota_{0,*}$ and $\iota_{1,*}$, defined by
$\sigma\mapsto (\sigma\times\id)_{*}(P)$, where $C_{*}(-)$ denotes the complex generated by filtered simplices.
Let us also notice
$$
((\sigma\times\id)_{*}(P_{j}))^{-1}(X_{i}\times [0,1])=(\sigma^{-1}(X_{i})\times [0,1])\cap P_{j},
$$
with $P_{j}=
\langle a_{0},\ldots,a_{j},b_{j},\ldots,b_{m}\rangle
$.
Thus the simplices of $h$ are filtered, since $\sigma$ is.

Consider   the complex $ \gC_{*}(-)$, generated by regular simplices. Since
 $h$ preserves regular simplices and non-regular simplices, 
we  get the chain homotopy 
$h\colon
\gC_{*}(X)\to \gC_{*+1}(X\times [0,1])$
between $\iota_{0,*}$ and $\iota_{1,*}$,
Now, it remains to prove
\begin{equation}\label{7}
\sigma \  \ov{p}\text{-allowable filtered  simplex} \Rightarrow  h(\sigma) \  \ov{p}\text{-allowable filtered chain}.
\end{equation}
Let $j\in\{0,\ldots,m\}$. We denote
$\tau_{j}\colon\nabla=\langle v_{0},\ldots,v_{m+1}\rangle\to \Delta^m\times [0,1]$,
the simplex defined by
$(v_{0},\ldots,v_{m+1})\mapsto (a_{0},\ldots,a_{j},b_{j},\ldots,b_{m})$. By construction we have
$\Delta^m\times [0,1]=\cup_{j=0}^m\tau_{j}(\nabla)$.
For each stratum $S \in \cS$, we have
$$\tau_{j}^{-1}(\sigma\times\id)^{-1}(S\times [0,1])=
\tau_{j}^{-1}(\sigma^{-1}(S)\times [0,1])\subset
\sigma^{-1}(S)\times [0,1],$$
and therefore
$$
\|(\sigma\times\id)\circ\tau_{j}\|_{S\times [0,1]}\leq 
\|\sigma\|_{S}+1
\leq 
m- \codim S+\ov{p}(S)+1.
$$
We get
  \begin{eqnarray*}
  \|h(\sigma)\|_{S \times [0,1]}
  &\leq &
  \max_{j}\|(\sigma\times \id)\circ \tau_{j}\|_{S \times [0,1]}
  \leq  m- \codim S+\ov{p}(S)+1 \\
  &= &
 \dim\nabla-\codim (S\times [0,1]) +\ov{p}(S \times [0,1]).
   \end{eqnarray*}
This proves \eqref{7}.
 \end{proof}

\begin{corollaire}\label{cor:RfoisX}
Let $(X,\ov{p})$ be a perverse space. 
The inclusions $\iota_{z}\colon X\hookrightarrow \R\times X$, $x\mapsto (z,x)$,  with $z\in\R$, 
and the canonical projection  $\pr \colon \R\times X\to X$ induce isomorphisms
$H^{\ov{p}}_{k}(X;G)\cong H^{\ov{p}}_{k}(\R\times X;G)$
and
$\gH^{\ov{p}}_{k}(X;G)\cong \gH^{\ov{p}}_{k}(\R\times X;G)$.
\end{corollaire}
\section{Mayer-Vietoris sequence}\label{sec:MVhomologie}

\begin{quote}
In this section, we construct  Mayer-Vietoris sequences for the intersection homology and the tame intersection homology, 
$H^{\ov{p}}_{*}(X;G)$ and $\gH^{\ov{p}}_{*}(X;G)$. 
This is  a key point in  the proofs of 
Theorems \ref{thm:martinmacphersonhomologie}, \ref{thm:martingreghomologie}
and \ref{thm:invariancegenerale}.
\end{quote}

\begin{proposition} [Mayer-Vietoris sequence]\label{prop:MVhomologie}
Given a perverse space   $(X, \ov{p})$ and an open covering  $\{U,V\}$ of  $X$,
there exist two exact sequences,
\begin{equation}\label{equa:MVH}
\def\objectstyle{\scriptstyle}
\xymatrix@C=5mm{	
\dots \ar[r] & H^{\ov{p}}_{i}(U\cap V;G)
\ar[r] & 
H^{\ov{p}}_{i}(U;G)\oplus H^{\ov{p}}_{i}(V;G)
\ar[r] & 
 H^{\ov{p}}_{i}(X;G)
 \ar[r] & 
  H^{\ov{p}}_{i-1}(U\cap V;G)
 \ar[r] & \dots\\
}\end{equation}
and
\begin{equation}\label{equa:MVgH}
\def\objectstyle{\scriptstyle}
\xymatrix@C=5mm{	
\dots \ar[r] & \gH^{\ov{p}}_{i}(U\cap V;G)
\ar[r] & 
\gH^{\ov{p}}_{i}(U;G)\oplus \gH^{\ov{p}}_{i}(V;G)
\ar[r] & 
 \gH^{\ov{p}}_{i}(X;G)
 \ar[r] & 
  \gH^{\ov{p}}_{i-1}(U\cap V;G)
 \ar[r] & \dots \\
}\end{equation}
The connecting map of (\ref{equa:MVH}) is given by 
$\delta_{h}(\xi)=[\partial \xi_{U}]$,
where $\xi_{U}$ is obtained from the subdivision operator,
$\sd^k\xi=\xi_{U}+\xi_{V}\in \lau C {\ov{p}}* U + \lau C {\ov{p}} *V$ of \eqref{equa:subdivisionetintersection}.
The connecting map of (\ref{equa:MVgH}) is similar from the subdivision operator (\ref{equa:subdivisionetintersectionBis}).
\end{proposition}

Before proving this result we need to study the  $\ov{p}$-allowable simplices that are not 
$\ov{p}$-intersection chains.
We follow the method used in \cite[Proposition A.14.(i)]{CST1}. The thrust is that the $\ov p$-allowability default of the boundary of a $\ov p$-allowable simplex is  concentrated in only one  face.

\begin{definition}{(\cite[Definition A.11]{CST1})}\label{def:banefulfatal}
Let $(X,\ov p)$ be a perverse space and
  $\sigma \colon \Delta \to X$ be a filtered  $\ov{p}$-allowable simplex.  
For each stratum  $S \in \cS$ with $\sigma^{-1}(S)\neq\emptyset$, we denote
$F_S$ the smallest face of  $\Delta$ containing $\sigma^{-1}(S) $.
\emph{The $\ov{p}$-bad face} of $\sigma$ 
is the restriction 
$\sigma \colon T_{\ov{p}}(\sigma)\to X$ 
where
 \begin{equation}\label{52}
T_{\ov{p}}(\sigma) = \min \{ F_S \mid S\in\cS,  F_{S}\neq \Delta  \text{ \and }\dim F_S = \dim \Delta - \codim S + \ov{p}(S)\},
 \end{equation}
if this family is not empty. 
 By extension,  the face $T_{\ov{p}}(\sigma)$ of  $\Delta$ is also called a \emph{$\ov{p}$-bad face}.
 \end{definition}

Since the previous family  $\{F_S \}$  mentioned above is totally ordered (cf. \cite[Lemma A.24]{CST1}),
 the definition of  $T_{\ov{p}}(\sigma)$ makes sense. 
 Observe also that if the stratum  $S$ is regular then  $\ov{p}(S)=0=\codim S$ and 
 the inequality of \eqref{52} does not occur.

\begin{proposition}\label{prop:badface}
Let $(X,\ov p)$ be a perverse space and
$\sigma \colon \Delta \to X$  a filtered  $\ov{p}$-allowable simplex. 
\begin{enumerate}[\rm (a)]
\item  A codimension one face $s$ of  $\sigma$ is not  $\ov{p}$-allowable if, and only if,
$\sigma$ has a $\ov{p}$-bad face included in $s$.
\item Let  $\sigma'\colon \Delta\to X$ be another  $\ov{p}$-allowable simplex.
If $\sigma$ and  $\sigma'$ share  a codimension one  face  $\sigma''$ which is  not $\ov p$-allowable, then 
$\sigma$ and $\sigma'$ have the same  $\ov{p}$-bad face. Moreover, this face is a face of $\sigma''$.
\item The simplex  $\sigma$ is a $\ov{p}$-intersection chain if, and only if, it has no $\ov{p}$-bad face.
\end{enumerate}
\end{proposition}

\begin{proof}
(a) Let $s\colon \nabla \to X$ be a face of  codimension one of $\sigma$. 

$\bullet$ Let us suppose that the face  $s$ is not  $\ov{p}$-allowable. 
So, there exists a stratum  $S \in \cS$ such that  $s^{-1}(S)$ is not included in
the $( \dim \nabla - \codim S + \ov{p}(S))$- skeleton of  $\nabla$. 
Since  $s^{-1}(S) \ne \emptyset$ then $\sigma^{-1}(S) \ne \emptyset$. 
Recall that $F_{S}$ is the smallest face of  $\Delta$ containing $\sigma^{-1}(S)$. 
As the simplex $\sigma$ is $\ov{p}$-allowable, we have
$$\dim(F_{S}\cap \nabla)\leq\dim F_{S}\leq \dim \Delta - \codim S + \ov{p}(S).$$
On the other hand, as $s^{-1}(S)\subset F_{S}\cap \nabla$  and the simplex $s$ is not $\ov{p}$-allowable,
 we get
$$
\dim(F_{S}\cap \nabla)\geq \dim\nabla+1-\codim S+\ov{p}(S)
=  \dim \Delta - \codim S + \ov{p}(S).
$$ 
We conclude that  $\dim (F_{S}\cap\nabla)=\dim F_S = \dim \Delta - \codim S + \ov{p}(S)$ and
$F_S \subset \nabla $. 
As  $T_{\ov{p}}(\sigma)$ is the minimum for the order $\preceq$ of the faces $F_{S}$, we deduce  
$T_{\ov{p}}(\sigma) \subset \nabla$.

$\bullet$ Reciprocally, let us suppose that the $\ov{p}$-bad face of  $\sigma$ exists and verifies 
$T_{\ov{p}}(\sigma)  \subset \nabla$. So, there exists a stratum $S \in \cS$ with 
$$F_S \subset \nabla \text{ and }\dim F_S = \dim \Delta - \codim S + \ov{p}(S),$$
where $F_S$ is the smallest face of $\Delta$ containing  $s^{-1}(S)$.
The inclusions $\sigma^{-1}(S)\subset F_{S}\subset \nabla$ imply
$\sigma^{-1}(S)=s^{-1}(S)$. 
Thus, $F_{S}$ is also the smallest face of  $\nabla$ containing  $s^{-1}(S)$. From
$$\dim F_{S}=\dim\Delta-\codim S+\ov{p}(S)>\dim \nabla-\codim S+\ov{p}(S),$$
we deduce that the simplex $s$ is not $\ov{p}$-allowable.

\smallskip
(b) Let $\sigma''\colon \nabla \to X$ be the shared face of $\sigma$ and $\sigma'$.
Consider a stratum  $S \in \cS$ with $F_S =\hiru  T{\ov{p}}\sigma $. We have proven:
 $$F_S \subset \nabla \text{ and }
\emptyset \ne \sigma^{-1}(S) = {\sigma''}^{-1}(S)\subset \sigma'^{-1}(S).$$
Let $ F'_S$ be the smallest face of $\Delta$ containing $\sigma'^{-1}(S) $. 
It also contains  $T_{\ov{p}}(\sigma)$ and we have 
$$
\dim F'_S \leq \dim\Delta - \codim S + \ov{p}(S) = \dim T_{\ov{p}}(\sigma) ,
$$
since $\sigma'$ is $\ov{p}$-allowable. %
Thus we get  
$$T_{\ov{p}}(\sigma)  = F'_S \text{ and } \dim F'_S = \dim\Delta - \codim S + \ov{p}(S).$$
By definition of $\ov{p}$-bad faces, we conclude that $T_{\ov{p}}(\sigma') \subset F'_S$ and therefore
$T_{\ov{p}}(\sigma') \subset T_{\ov{p}}(\sigma)$. 
Since the simplices  $\sigma$ and $\sigma'$ play the same role we have $
T_{\ov{p}}(\sigma') =T_{\ov{p}}(\sigma)$. 
Property (a) gives that  $T_{\ov{p}}(\sigma)$ is a face of $\sigma''$.

\smallskip
(c) If the simplex $\sigma$ has a  $\ov{p}$-bad face $F$, then $F$ is included in the boundary of $\sigma$.
The result comes from (a).
\end{proof}

\begin{proof}[Proof of  \propref{prop:MVhomologie}]
Let us consider the two short exact sequences
\begin{equation}\label{equa:MVC}
\xymatrix@C=5mm{	
0 \ar[r] & C^{\ov{p}}_{*}(U\cap V)
\ar[r] & 
C^{\ov{p}}_{*}(U)\oplus C^{\ov{p}}_{*}(V)
\ar[r]^{\varphi} & 
 C^{\ov{p}}_{*}(U)+ C^{\ov{p}}_{*}(V)
 \ar[r] & 
 0\\
}\end{equation}
and
\begin{equation}\label{equa:MVgC}
\xymatrix@C=5mm{	
0 \ar[r] & \gC^{\ov{p}}_{*}(U\cap V)
\ar[r] & 
\gC^{\ov{p}}_{*}(U)\oplus \gC^{\ov{p}}_{*}(V)
\ar[r]^{\psi} & 
 \gC^{\ov{p}}_{*}(U)+ \gC^{\ov{p}}_{*}(V)
 \ar[r] & 
 0.\\
}\end{equation}
where the chain maps  $\varphi$ and $\psi$ are defined by  $(\alpha,\beta) \mapsto \alpha + \beta$.
The existence of the Mayer-Vietoris exact sequences come from the facts  
that the two inclusions $\im \varphi \hookrightarrow C^{\ov{p}}_{*}(X)$
and
$\im \psi \hookrightarrow \gC^{\ov{p}}_{*}(X)$
induce isomorphisms in homology.

The complex generated by the filtered simplices of $X$ is denoted by $C_{*}(X)$.
We have constructed two operators 
$\sd \colon C_{*}(X)\to C_{*}(X)$  and
$T\colon C_{*}(X) \to C_{*+1}(X)$ verifying $\sd \partial = \partial  \ \sd$ and 
$\partial T + T \partial  = \id -\sd$ (cf.  \cite[Lemma A.16]{CST1}).
These two operators are constructed as in the classical homology setting from the barycentric subdivision,
see \cite[Pages 176-179]{Span}. We denote by $\Delta^{per}(\Delta)$ the complex generated by the simplices
$\zeta\colon \Delta^{\ell}\to \Delta$ such that
$\zeta(\sum_{i}t_{i}e_{i})=\sum_{i}t_{i}\zeta(e_{i})$
and $\dim \zeta^{-1}(F) \leq \dim F$, for any face $F$ of $\Delta$. 
On such simplex, we consider the classical barycentric subdivision,
$\sd(\zeta)$, and set
$\sd(\sigma)=\sigma_{*}(\sd([\Delta])$
as in \cite{Span}. The operator $T$ is constructed similarly in \cite[Lemma A.16]{CST1}.
  We have also established the inequality
 \begin{equation}\label{equa:kfuche}
 ||\sigma \circ \zeta ||_{i} \leq 
||\sigma  ||_{i}.
\end{equation}
We decompose the proof of   \propref{prop:MVhomologie} in three steps.

\smallskip
$\bullet$ \emph{First step: 
The subdivision operator
$\sd \colon C^{\ov{p}}_{*}(X) \to C^{\ov{p}}_{*}(X)$  is homotopic to the identity and induces an operator, 
$\sda \colon  \gC^{\ov{p}}_{*}(X) \to  \gC^{\ov{p}}_{*}(X)$, also homotopic to the identity}.  
For the first assertion, it suffices to prove that the operators $\sd$ and $T$ preserve the perverse degree, that is, they induce the operators
$\sd \colon C^{\ov{p}}_{*}(X) \to C^{\ov{p}}_{*}(X)$  and 
$T \colon C^{\ov{p}}_{*}(X) \to C^{\ov{p}}_{*+1}(X)$.

Let us begin with $\sd$.  Given a filtered simplex $\sigma \colon \Delta \to X$,
 we need to prove the inequality $||\sigma \circ \zeta ||_S \leq ||\sigma||_S$,
for each   $S\in\cS$ and each simplex $\zeta \in \Delta^{per}(\Delta)$. 
Write $i = \codim S$. This inequality is clear when  $\im (\sigma\circ\zeta) \cap S =\emptyset$. 
If $\im (\sigma \circ \zeta)  \cap S \ne\emptyset$, then $\im \sigma   \cap S \ne\emptyset$ and we get
$$
||\sigma \circ \zeta ||_S = ||\sigma \circ \zeta ||_{i} \leq 
||\sigma  ||_{i} =  ||\sigma ||_S,
$$
where the inequality comes from (\ref{equa:kfuche}).  A similar proof works for the operator 
$T$.

Let us proceed now with the operator $\sda$. 
Let $\sigma \colon \Delta \to X$ be a filtered simplex and  $\zeta$ a simplex in $ \Delta^{per}(\Delta)$. 
 We consider the  $\sigma$-decomposition $\Delta = \Delta_0 * \cdots * \Delta_n$  
 and the $(\sigma \circ \zeta)$-decomposition  $\Delta = \Delta'_0 * \cdots * \Delta'_n$. 
 By definition of $\zeta$ we have $\Delta'_0 * \cdots * \Delta'_{n-1} \subset \Delta_0 * \cdots * \Delta_{n-1}$ 
 and therefore $\dim \Delta_n \leq  \dim \Delta'_n$. So, if $\sigma$ is a regular simplex then 
 all the simplices of $\sd (\sigma)$ are also regular simplices. On the other hand, 
 if $\sigma$ is not a regular simplex, that is $\im \sigma \subset \Sigma$,   then $\sigma \circ \zeta$ neither is. 
 From \defref{def:tameGreg}, we can write $\partial \sigma=\partial_{\sing}\sigma+ \gd\sigma$, with
 $\im(\partial_{\sing}\sigma)\subset \Sigma$. We decompose
 \begin{itemize}
 \item $\sd (\partial \sigma) =\sd (\partial_{\sing}\sigma) + \sd (\gd \sigma)$,
 \item $\partial (\sd \sigma)= \partial_{\sing}(\sd \sigma)+ \gd (\sd \sigma)$.
 \end{itemize}
 From the previous part, we know $\sd(\partial \sigma)=\partial(\sd \sigma)$. The equality of their regular parts
 gives $\sd (\gd \sigma)=\gd (\sd \sigma)$. Thus the map $\sd$ induces a chain map  
 $\sda \colon \gC^{\ov{p}}_{*}(X) \to \gC^{\ov{p}}_{*}(X)$.
A similar proof works for the construction of a homotopy operator $\gT\colon \gC_{*}(X) \to \gC_{*+1}(X)$ from $T$.

\smallskip
$\bullet$ \emph{Second step. We prove the two following implications:
\begin{equation}\label{equa:subdivisionetintersection}
\xi  \in C^{\ov{p}}_{*}(X)\Rightarrow  \sd^k \xi \in 
C^{\ov{p}}_{*}(U)+ C^{\ov{p}}_{*}(V) \text{ for some } k \geq 0,
\end{equation}
and
\begin{equation}\label{equa:subdivisionetintersectionBis}
\xi  \in \gC^{\ov{p}}_{*}(X)\Rightarrow  \sda^k \xi \in 
\gC^{\ov{p}}_{*}(U)+ \gC^{\ov{p}}_{*}(V) \text{ for some } k \geq 0.
\end{equation}
}
The \emph{canonical decomposition} of a $\ov p$-allowable chain $\xi$ is $\xi = \xi_0 + \sum_{\tau \in I_\xi} \xi_\tau$ where:
\begin{itemize}
\item[-] $\xi_0$ is the chain containing the simplices of $\xi$ without $\ov p$-bad faces.
\item[-] $I_\xi$ is the family of the $\ov p$-bad faces of the simplices of $\xi$.
\item[-] $\xi_\tau$ is the chain containing the simplices of $\xi$ having $\tau$ as $\ov p$-bad face.
\end{itemize}
The boundaries $\partial \xi_0$ and $\gd \xi_0$  are $\ov p$-allowable chains.
A non-$\ov p$-allowable face (resp.  regular non-$\ov p$-allowable face) $s$ of a simplex $\sigma$ of $\xi_\tau$ contains necessarily $\tau$. 
When $\partial \xi$  (resp. $\gd \xi$) is a $\ov p$-allowable chain then $s$ does not appear in 
$\partial \xi$ (resp. $\gd \xi$). 
So, there exists another simplex $\sigma'$ in $\xi$ having $s$ as a face. 
Since $s$ contains $\tau$ then $\tau$ is the bad face of $\sigma'$. 
We conclude that 
 $\partial \xi_\tau$ (resp. $\gd \xi_\tau$) is also a $\ov p$-allowable chain.
 (These facts come from \propref{prop:badface}.) For each $\tau \in I_{\xi}$, we have proven
 $$
\left( 
\xi\in C^{\ov{p}}_{*}(X)\Rightarrow  
 \xi_0, \xi_\tau \in C^{\ov{p}}_{*}(X)
 \right)
 \text{ and }
 \left(
  \xi\in \gC^{\ov{p}}_{*}(X)\Rightarrow  
 \xi_0, \xi_\tau \in \gC^{\ov{p}}_{*}(X)
 \right).
 $$
The usual subdivision argument gives the existence of an integer $k\geq 1$ such that 
the canonical decomposition of $\sd^k \xi$ verifies the following properties.
\begin{itemize}
\item[-] Each simplex of $(\sd^k \xi)_0$ lives in $U$ or in $V$.
\item[-] For each $\tau \in I_{\sd^k \xi}$ the chain $(\sd^k \xi)_\tau$ lives in $U$ or in $V$.
\end{itemize}
This gives \eqref{equa:subdivisionetintersection} and \eqref{equa:subdivisionetintersectionBis}

\smallskip
$\bullet$ \emph{Third step. 
The two inclusions $\iota\colon \im \varphi \hookrightarrow C^{\ov{p}}_{*}(X)$
and
$\kappa\colon \im \psi \hookrightarrow \gC^{\ov{p}}_{*}(X)$
induce isomorphisms in homology.}

Let $[\xi]\in H^{\ov{p}}_{*}(X)$. 
The first step implies   $[\xi]=[\sd^i\xi]$, for each  $i\geq 0$.
From the second step, we get an integer  $k\geq 0$ with  $\sd^k\xi\in\im\varphi$. 
This gives the surjectivity of  $\iota_{*}$.

In order to prove the injectivity of $\iota_{*}$,
we consider  $[\alpha]\in  H_{*}(\im\varphi)$
and  $\xi\in C^{\ov{p}}_{*+1}(X)$  with 
$\alpha=\partial\xi$. The second step gives an integer  
$k\geq 0$ with  $\sd^k\xi\in\im\varphi$, and therefore we have
$[\alpha]=[\sd^k\alpha]=[\sd^k(\partial\xi)]=[\partial (\sd^k\xi)]$ 
and  $[\alpha]=0$ in   $H_{*}(\im\varphi)$.

The same arguments work for $\kappa_{*}$ and the connecting map is computed in the usual way.
\end{proof}

Excision property also exists in this context.

\begin{definition}\label{prop:homorelative}
Let  $(X,\ov{p})$ be a perverse space and $U \subset X$ an open subset.
The \emph{relative chain complexes} are the quotients
$C^{\ov{p}}_{*}(X,U;G)=C^{\ov{p}}_{*}(X;G)/ C^{\ov{p}}_{*}(U;G)$
and
$\gC^{\ov{p}}_{*}(X,U;G)=\gC^{\ov{p}}_{*}(X;G)/ \gC^{\ov{p}}_{*}(U;G)$.
The \emph{relative homologies},
$H^{\ov{p}}_{*}(X,U;G)$ and $\gH^{\ov{p}}_{*}(X,U;G)$,  are the homologies of these two complexes, respectively.
\end{definition}

These relative homologies give long exact sequences:
\begin{equation}\label{equa:suiterelative}
\dots\to 
H^{\ov{p}}_{i}(U;G)\to
 H^{\ov{p}}_{i}(X;G)\to
 H^{\ov{p}}_{i}(X,U;G)\to
 H^{\ov{p}}_{i}(U;G)\to
\dots
\end{equation}
and
\begin{equation}\label{equa:suiterelativeg}
\dots\to 
\gH^{\ov{p}}_{i}(U;G)\to
 \gH^{\ov{p}}_{i}(X;G)\to
 \gH^{\ov{p}}_{i}(X,U;G)\to
 \gH^{\ov{p}}_{i}(U;G)\to
\dots
\end{equation}

Proceeding as in the classical case 
 (\cite[Chapter 4, Section 6, Corollary 5]{Span}), we get an excision property.
 
\begin{corollaire}\label{prop:Excisionhomologie}
Let $(X, \ov{p})$ be a perverse space. If $F$ is a closed subset of  $X$ and $U$ is an open subset of $X$ 
with $F\subset U$,
then the natural inclusion  $(X\backslash F,U\backslash F)\hookrightarrow (X,U)$ induces the isomorphisms
$H^{\ov{p}}_{i}(X\backslash F,U\backslash F ;G) \cong H^{\ov{p}}_{i}(X,U ;G) \ \hbox{ and } \ 
H^{\ov{p}}_{i}(X\backslash F,U\backslash F ;G)\cong H^{\ov{p}}_{i}(X,U ;G) .$
\end{corollaire}

The principle of $\cU$-small chains also exists in this context.

\begin{corollaire}\label{prop:chaineUpetite}
Let  $(X, \ov{p})$ be a perverse space endowed with an open covering $\cU$. 
Denote $C^{\ov{p},\cU}_{*}(X;G)$ 
(resp. $\gC^{\ov{p},\cU}_{*}(X;G)$) 
the sub-complex of  $C^{\ov{p}}_{*}(X;G)$ (resp. $\gC^{\ov{p}}_{*}(X;G)$) 
made up of chains whose support is included in an element of  $\cU$.
Then,  the canonical inclusions,
$C^{\ov{p},\cU}_{*}(X;G)\hookrightarrow C^{\ov{p}}_{*}(X;G)$ 
and
$\gC^{\ov{p},\cU}_{*}(X;G)\hookrightarrow \gC^{\ov{p}}_{*}(X;G)$),
induce  isomorphisms in homology.
\end{corollaire}

\begin{proof}
By construction, for any element 
 $\xi \in C^{\ov{p}}_{*}(X)$ (resp. $\gC^{\ov{p}}_{*}(X)$)  
 there exists an integer $m$ with 
$\sd^m \xi \in  C^{\ov{p},\cU}_{*}(X)$ (resp. $\sda^m \xi \in \gC^{\ov{p},\cU}_{*}(X)$).
Moreover $\sd^m\xi $ and $\sda^m\xi $ are cycles if $\xi $ is one.
The  argument used in the third step of the proof of   \propref{prop:MVhomologie} gives the injectivity and the surjectivity of 
the two canonical inclusions.
\end{proof}

The following result is used in  the proof of \propref{prop:local}. 

\begin{corollaire}\label{cor:SfoisX}
Let $(X,\ov{p})$ be a perverse space. Let $M$ be a manifold which is an $\ell$-homological sphere. 
Then, there are isomorphisms,
$H_{k}^{\ov{p}}(M\times X;G)\cong H_{k}^{\ov{p}}(X;G)\oplus H_{k-\ell}^{\ov{p}}(X;G)$
and
$\gH_{k}^{\ov{p}}(M\times X;G)\cong \gH_{k}^{\ov{p}}(X;G)\oplus \gH_{k-\ell}^{\ov{p}}(X;G).$
\end{corollaire}

\begin{proof}
This is a consequence of Theorems~\ref{thm:martinmacphersonhomologie}, 
\ref{thm:martingreghomologie} and 
\cite[Theorems 5.2.25 and 6.3.20]{FriedmanBook}.
\end{proof}

\section{Proofs of Theorems \ref{thm:martinmacphersonhomologie} and \ref{thm:martingreghomologie}}\label{sec:thmA}

The method of  proof is a variant of \cite[Theorem 10]{MR800845}, 
\cite[Lemma 1.4.1]{MR2210257}. We choose the formulation presented by Friedman in 
\cite[Section 5.1]{FriedmanBook}. %

\begin{theoremv}{\rm(\cite[Theorem 5.1.1]{FriedmanBook})}\label{thm:gregtransformationnaturelle}
Let $\cF_{X}$ be the category whose objects are (stratified homeomorphic to) open subsets
of a given  CS set $X$ and whose morphisms are  stratified homeomorphisms and inclusions.
Let  $\cAb_{*}$ be the category of graded abelian groups. Let $F^{*},\,G^{*}\colon \cF_{X}\to \cAb$
be two functors and
 $\Phi\colon F_{*}\to G_{*}$ be a natural transformation satisfying
 the conditions listed
below.
\begin{enumerate}[(i)]
\item $F^{*}$ and $G^{*}$ admit exact Mayer-Vietoris sequences and the natural transformation $\Phi$ 
 induces a commutative diagram between these sequences.
\item If $\{U_{\alpha}\}$ is an increasing collection of open subsets of $X$  and 
$\Phi\colon F_{*}(U_{\alpha})\to G_{*}(U_{\alpha})$ is an isomorphism 
for each $\alpha$, then $\Phi\colon F^{*}(\cup_{\alpha}U_{\alpha})\to G^{*}(\cup_{\alpha}U_{\alpha})$  is an isomorphism.
\item If $L$ is a compact filtered space such that 
$X$  has an open subset  which is stratified homeomorphic
to $\R^i\times \rc L$ and, if
$\Phi\colon F^{*}(\R^i\times (\rc L\backslash \{\tv\}))\to G^{*}(\R^i\times (\rc L\backslash \{\tv\}))$
is an isomorphism, then so is
$\Phi\colon F^{*}(\R^i\times \rc L)\to G^{*}(\R^i\times \rc L)$. Here, $\tv$ is the apex of the cone $\rc L$.

\item If $U$ is an open subset of X contained within a single stratum and homeomorphic
to an Euclidean space, then $\Phi\colon F^{*}(U)\to G^{*}(U)$ is an isomorphism.
\end{enumerate}
Then $\Phi\colon F^{*}(X)\to G^{*}(X)$ is an isomorphism.
\end{theoremv}

Before using \thmref{thm:gregtransformationnaturelle}, we need to compute the (tame) intersection homology of a cone. 
Let us emphasize the difference between the two homologies.

\begin{proposition}[Homology of a cone]\label{prop:homologiecone}
Let $X$  be a compact filtered space of dimension $n$.
Consider a perversity $\ov p$ on the cone $\rc X$.
We have:
$$
H^{\ov{p}}_{k}(\rc X;G)
\cong
\left\{
\begin{array}{ll}
H^{\ov{p}}_{k}( X;G)& \hbox{if } k< n-\ov{p}( \{ \tv \}),\\
0 & \hbox{if } 0\ne k \geq n-\ov{p}( \{ \tv \}),\\
G& \hbox{if } 0= k \geq n-\ov{p}( \{ \tv \}),
\end{array}
\right.
$$
and
$$
\gH^{\ov{p}}_{k}(\rc X;G)
\cong
\left\{
\begin{array}{ll}
\gH^{\ov{p}}_{k}( X;G)& \hbox{if } k< n-\ov{p}( \{ \tv \}),\\
0 & \hbox{if }  k \geq n-\ov{p}( \{ \tv \}).
\end{array}
\right.
$$
In the two cases, the isomorphism at the top is induced by the map  $\iota_{\rc X}\colon X\to \rc X$, $x\mapsto [x,1/2]$.
The other non-zero determination comes from the canonical inclusion
$C^{\ov{p}}_{k}(\rc X;G)\to C_{k}(\rc X;G)$.
\end{proposition}

\begin{proof}
We proceed in several steps. 

$\bullet$ \emph{In the low degrees.} 
Let $k=\dim\Delta\leq n-\ov{p}( \{ \tv \})$ and
 $\sigma\colon \Delta\to \rc X$  be a   $\ov{p}$-allowable filtered simplex.
The allowability condition $\|\sigma\|_{\{\tv\}}\leq k-(n+1)+\ov{p}( \{ \tv \})<0,$ implies 
 $\sigma^{-1}(\{\tv\})=\emptyset$.
 We therefore have
 $C^{\ov{p}}_{\leq n-\ov{p}(\tv)}(\rc X)=C^{\ov{p}}_{\leq n-\ov{p}(\tv)}( X\times ]0,1[)
 $
 and
 $\gC^{\ov{p}}_{\leq n-\ov{p}(\tv)}(\rc X)=\gC^{\ov{p}}_{\leq n-\ov{p}(\tv)}( X\times ]0,1[)
 $.
 Thus, \corref{cor:RfoisX} gives the requested isomorphisms.

$\bullet$ \emph{Construction of a cone operator}.
If $[x,t]\in\rc X$, we define
$s\cdot [x,t]=[x,st]$, for each $s\in[0,1]$. 
Let $\sigma\colon \Delta\to \rc X$  be a   $\ov{p}$-allowable filtered simplex.
We define  the cone $c\sigma\colon \{\tv\}\ast \Delta\to \rc X$
by
$c\sigma(sx+(1-s)\tv)=s\cdot\sigma(x)$, for each $s\in[0,1]$ and $x\in\Delta$.

If $\dim\Delta=k\geq n-\ov{p}(\{\tv\})$, we prove that $c\sigma$ is $\ov{p}$-allowable. For that, let us observe that
the cone $\rc X$ has strata of two types: the products $S\times ]0,1[$ with  $S \in \cS_{X}$ 
and the apex $\{\tv\}$ of the cone. 

\begin{enumerate}[(i)]
\item  For a stratum $S\times ]0,1[$, we have
$$
(c\sigma)^{-1}(S \times ]0,1[)=\{sa+(1-s)\tv\mid s\cdot \sigma(a)\in S \times ]0,1[\}
=\sigma^{-1}(S \times ]0,1[)\times ]0,1[.
$$
If $(c\sigma)^{-1}(S \times ]0,1[)\neq\emptyset$,
then $\sigma^{-1}(S \times ]0,1[)\neq\emptyset$ and therefore
\begin{eqnarray*}
||c\sigma||_{S \times ]0,1[}&=&
 1 + ||\sigma||_{S \times ]0,1[}\\
&\leq &
1 + k-\codim (S\times ]0,1[) + \ov{p}(S\times ]0,1[)\\
& \leq&
\dim (\{\tv\}\ast\Delta)-\codim (S\times ]0,1[) + \ov{p}(S\times ]0,1[).
\end{eqnarray*}
\item For the apex, we have
$(c\sigma)^{-1}(\{ \tv\})=\{\tv\}\ast \sigma^{-1}(\{ \tv\})$
which implies
\begin{eqnarray*}
||c\sigma||_{\{\tv\}}&=& 
\left\{ \begin{array}{ll}
0 & \hbox{if }  \sigma^{-1}(\{\tv\})= \emptyset,\\
1 + ||\sigma||_{\{\tv\}} & \hbox{if }  \sigma^{-1}(\{\tv\})\neq \emptyset.
\end{array}
\right. 
\end{eqnarray*}
 If $\sigma^{-1}(\{\tv\})\neq \emptyset$, we get
 $\|c\sigma\|_{\{\tv\}}= 1+\|\sigma\|_{\{\tv\}}\leq \dim (\{\tv\}\ast \Delta)- \codim \{\tv\} + \ov{p}( \{ \tv \}).$
 If  $\sigma^{-1}(\{\tv\})= \emptyset$, the  $\ov{p}$-allowability condition relatively to  $\{\tv\}$ becomes
 $$0=\|c\sigma\|_{\{\tv\}}\leq \dim (\tv\ast \Delta)-\codim \{\tv\}+\ov{p}(\{\tv\})=(k+1)-(n+1)+\ov{p}(\{\tv\}),$$
which is exactly, $k\geq n-\ov{p}( \{ \tv \})$.
\end{enumerate}
We have proven that $c\sigma$ is a  $\ov p$-allowable filtered simplex, as required.

$\bullet$ \emph{Let $0\ne k \geq n-\ov{p}( \{ \tv \})$}.
Given a cycle $\xi \in C^{\ov{p}}_{k}(X)$ (resp. $\xi' \in \gC^{\ov{p}}_{k}(X)$), 
the equality $\partial c \xi = \xi$ (resp. $\gd c \xi' =\xi'$) gives the result.

$\bullet$ \emph{Let $0 =k \geq n-\ov{p}( \{ \tv \})$}.
Given a simplex  $\sigma \in C^{\ov{p}}_{0}(X)$ (resp. $\sigma' \in \gC^{\ov{p}}_{0}(X)$),
 the equality $\partial c \sigma = \sigma - \tv$ (resp. $\gd c\sigma' =\sigma'$) gives the result.
\end{proof}

\propref{prop:homologiecone},  \corref{cor:RfoisX} and the long exact sequence of a pair imply the following result.

\begin{corollaire}\label{cor:homologieconerel}
With the hypotheses and  the notations of \propref{prop:homologiecone}, we have
$$
H^{\ov{p}}_{k}(\rc X,\rc X \backslash \{ \tv\} ;G)
=\left\{
\begin{array}{cl}
\widetilde{H}^{\ov{p}}_{k-1}(\rc X;G)
&\text{ if } k\geq n+1-\ov{p}( \{ \tv \})
,\\[.2cm]
0&\text{ if }  k \leq  n-\ov{p}( \{ \tv \}),\\[.2cm]
\end{array}\right.$$
where $\widetilde{H}$ is the reduced homology, and
$$
\gH^{\ov{p}}_{k}(\rc X,\rc X \backslash \{ \tv\} ;G)
=\left\{
\begin{array}{cl}
\gH^{\ov{p}}_{k-1}(\rc X;G)
&\text{ if } k\geq n+1-\ov{p}( \{ \tv \})
,\\[.2cm]
0&\text{ if }  k \leq  n-\ov{p}( \{ \tv \}).
\end{array}\right.$$
\end{corollaire}

\begin{proof} [Proof of  \thmref{thm:martinmacphersonhomologie}]
We verify the conditions of  \thmref{thm:gregtransformationnaturelle} 
for the natural transformation  
$\Phi\colon H^{\ov{p}}_{*}(U)\to I^{\ov{p}}H_{*}(U)$
induced by the canonical inclusion
$C^{\ov{p}}_{*}(U)\hookrightarrow I^{\ov{p}}C_{*}(U)$.

\medskip
(a) The Mayer-Vietoris exact sequences have been constructed in \propref{prop:MVhomologie}
for the  complex $C^{\ov{p}}_{*}(X)$
and in \cite[Theorem 4.4.19]{FriedmanBook}  for the complex $I^{\ov{p}}C_{*}(X)$.

\medskip
(b) This a classical argument for  homology theories with compact supports.

\medskip
(c) Let $L$ be a compact filtered space such that the natural inclusion induces the isomorphism
$$\Phi_{(\R^i\times (\rc L\backslash \{\tv\})}\colon
H_{*}^{\ov{p}}(\R^i\times (\rc L\backslash\{\tv\}))
\xrightarrow[]{\cong} 
I^{\ov{p}}H_{*}(\R^i\times (\rc L\backslash\{\tv\})).$$
Since  $\R^i\times ]0,1[\times L =  \R^i\times (\rc L\backslash \{\tv\})$, we get the isomorphism
$$\Phi_{\R^i\times ]0,1[\times L}\colon 
H_{*}^{\ov{p}}(\R^i\times ]0,1[ \times L)
\xrightarrow[]{\cong} 
I^{\ov{p}}H_{*}(\R^i\times ]0,1[ \times L).$$
Let us consider the following commutative diagram
$$\xymatrix{
H_{*}^{\ov{p}}(\R^i\times ]0,1[ \times L)
\ar[d]_{\pr_{*}}\ar[rr]^-{\Phi_{\R^i\times ]0,1[\times L}}
&&
I^{\ov{p}}H_{*}(\R^i\times ]0,1[ \times L)
\ar[d]^{\pr_{*}}\\
H_{*}^{\ov{p}}(L)
\ar[d]_{(\iota_{\rc L})_{*}}\ar[rr]^-{\Phi_{L}}
&&
I^{\ov{p}} H_{*}(L)
\ar[d]^{(\iota_{\rc L})_{*}}\\
H_{*}^{\ov{p}}(\rc L)
\ar[rr]^-{\Phi_{\rc L}}
&&
I^{\ov{p}} H_{*}(\rc L).
}$$
From \corref{cor:RfoisX} and \cite[Example 4.1.13.]{FriedmanBook}, we know that
 the two maps $\pr_{*}$, induced by the canonical projections, are isomorphisms. 
 We conclude that  $\Phi_{L}$ is an isomorphism.

If $*< n-\ov{p}( \{ \tv \})$ then \propref{prop:homologiecone} and 
\cite[Theorem 4.2.1]{FriedmanBook} imply that the two maps 
$(\iota_{\rc L})_*$ are isomorphisms. So, $\Phi_{\rc L}$ is an isomorphism in these degrees.

When $*\geq n-\ov{p}( \{ \tv \})$, the map $\Phi_{\rc L}$ is directly an isomorphism (cf. \propref{prop:homologiecone} and \cite[Section 5.4]{FriedmanBook}).

\medskip
(d) The map $\Phi\colon \lau H {\ov{p}}*{U;G}\to \bost I  {\ov{p}}H{*}{U;G}$ becomes the identity $G \to G$.
\end{proof}

\begin{proof} [Proof of  \thmref{thm:martingreghomologie}]
The same proof works, changing   \cite[Theorem 4.4.19]{FriedmanBook},  \cite[Example 4.1.13]{FriedmanBook} 
and  \cite[Theorem 4.2.1]{FriedmanBook}  by  
\cite[Theorem 6.3.12]{FriedmanBook},  \cite[Corollary 6.3.8]{FriedmanBook} 
and \cite[Theorem 6.2.13]{FriedmanBook} respectively  .
\end{proof}

We end this Section with concrete computations for some particular perversities.

\begin{proposition}\label{pro:PerDir}
For any  perverse CS set  $(X,\ov p)$, we have the following equalities. 

\begin{enumerate}[\rm a)]
\item $H^{\ov{t}}_{*}(X;G) = \gH^{\ov{t}}_{*}(X;G)   =H_{*}(X;G)$, if $X$ is normal,
\item $H^{\ov{p}}_{*}(X;G)  = H_{*}(X;G)$, if $\ov p > \ov t$,
\item $\gH^{\ov{p}}_{*}(X;G) = H_{*}(X,\Sigma;G)$, if $\ov p > \ov t$,
\item $H^{\ov{p}}_{*}(X;G) = \gH^{\ov{p}}_{*}(X;G)   =H_{*}(X\backslash \Sigma;G)$, if $\ov p <\ov 0$.
\end{enumerate}
\end{proposition}

\begin{proof}
The argument is similar in the four cases, by applying \thmref{thm:gregtransformationnaturelle}. 
We only detail the proof of property a). In fact, the only relevant point is the item (iii).

Let $U \subset X$ be an open subset.
We use \remref{igual} and  consider the natural transformation  
 $\Phi_{U}\colon H_{*}^{\ov{t}}(U)\to  H_{*}(U)$, 
 induced by the canonical inclusion
$C_{*}^{\ov{t}}(U)\hookrightarrow C_{*}(U)$.
In order to verify Property (iii) of \thmref{thm:gregtransformationnaturelle},
we consider a compact stratified space $L$ and the commutative diagram
$$\xymatrix{
H^{\ov{t}}_*(L)
\ar[d]_{(\iota_{\rc L})_{*}}\ar[rr]^-{\Phi_{L}}
&&
H_{*}( L)
\ar[d]^{(\iota_{\rc L})_{*}}\\
H^{\ov{t}}_*(\rc L)
\ar[rr]^-{\Phi_{\rc L}}
&&
H_{*}(\rc L).
}$$
where $\Phi_L$ is an isomorphism.
We have to prove that $\Phi_{\rc L}$ is an isomorphism.
Since $n-\ov{t}(\tv) = n-(n-1) = 1$,  
\propref{prop:homologiecone} implies that
$
H^{\ov{t}}_*(\rc L)= H^{\ov{t}}_0(\rc L) =
H^{\ov{t}}_0(L)$.
From an induction on the depth of $X$, we get 
$H^{\ov{t}}_0(L)= H_0(L)$. 
Since the space $X$ is normal, then  $L$ is connected and $\Phi_{\rc L}$ is an isomorphism. 
\end{proof}

Notice that the item c) implies that  tame intersection homology is not a topological invariant.

\section{Topological invariance. \thmref{thm:invariancegenerale}}\label{sec:invariancel}

\begin{quote}
In this section, we present the construction of the intrinsic CS set associated to a CS set.
In \thmref{thm:invariancegenerale}, we use it to prove
a topological invariance of intersection homology for general perversities verifying the conditions of \defref{def:llperversite}.
A consequence is the topological invariance for general perversities  
obtained as pull-back's of a perversity defined on the intrinsic CS set.
The situation of tame intersection homology is also described.
\end{quote}

The intrinsic CS set associated to a CS set is defined by  King in \cite{MR800845}, crediting the idea to Sullivan.
A careful study of this notion has been done by  Friedman in \cite[Section 2.10]{FriedmanBook}.

\begin{definition} \label{def:RelEquiv1}
Two points $x_{0}$, $x_{1}$ of a topological space $X$ are  \emph{equivalent} if there exists a homeomorphism $ (U_{0},x_{0})\cong (U_{1},x_{1})$
between two neighborhoods of  $x_{0}$ and $x_{1}$.
This equivalence relation is denoted by $x_{0}\sim x_{1}$.
\end{definition}

Two points belonging to the same stratum of a CS set are equivalent.
So the equivalence classes are union of strata.
The  construction of the intrinsic CS set is as follows.

\begin{proposition}{\cite[Section 2.10]{FriedmanBook}}\label{prop:xetxstar}
Let $X$ be a  CS set. We denote by  $X^*_{i}$ the union of equivalence classes, relatively to $\sim$,
made up of strata of $X$ whose dimension is lower than or equal to $i$.
Then, the topological space $X$ endowed with this filtration is a CS set. 
Moreover, this filtration  does not depend on the initial CS structure considered on $X$.
\end{proposition}

\begin{definition}\label{def:Xstar}
With the notations of \propref{prop:xetxstar}, 
the space $X^*$ is called the
\emph{intrinsic CS set} associated to the CS set $X$ and its filtration 
is the \emph{intrinsic filtration} of $X$.
 The identity map $\nu\colon X\to X^*$ is the \emph{ intrinsic aggregation.}
 For sake of simplicity, we denote $\cS^*=\cS_{X^*}$.
\end{definition}

We need to relate the strata of $X$ and $X^*$ in order to compare their homologies. 
This is the goal of the following definition.
\begin{definition}\label{def:source}
Let  $X$ be a  CS set with intrinsic aggregation  $\nu\colon X\to X^*$. 
A stratum $S$ of $X$  is a \emph{source of the stratum}
$T \in \cS^*$ if $\nu(S) \subset T$ and $\dim S = \dim T$.
\end{definition} 

\begin{proposition}\label{prop:source}
Let  $X$ be a  CS set with intrinsic aggregation  $\nu\colon X\to X^*$. Then, the following properties hold.
\begin{enumerate}[\rm (a)]
\item The intrinsic aggregation is a stratified map.
\item Let $S\in\cS_{X}$ be a singular stratum. Then there exists a source stratum, $R$, of $S^{\nu}$ such that
$S\preceq R$ and $R\sim S$.
\item The  intrinsic filtration of an open subset $U \subset X$ is  
$U_{i}^*=X^*_{i}\cap U$. Let 
$\iota\colon U\to X$, $j\colon U^*\to X^*$  be the canonical injections 
and  $\nu^U\colon U\to U^*$ the intrinsic aggregation of $U$. 
Then, $\nu \circ \iota = j \circ \nu^U$.
\end{enumerate}
\end{proposition}

\begin{proof}
Let $T$ be a stratum of $X^*$. The set
$\nu^{-1}(T)=\cup_{\nu(S)\subset T} S$
is a topological manifold of the same dimension than $T$.
Moreover, the family of strata,
$\{S\mid \nu(S)\subset T\}$,
is a locally finite family of sub-manifolds of $T$.

(a) From $\nu(S)\subset T$, we deduce $\dim S\leq \dim T$ and $\codim T\leq \codim S$.
Therefore, $\nu$ is a stratified map.

(b) It is equivalent to prove that the union of the sources of a given stratum $T \in \cS^*$ is a dense subset of $T$.
 The union of the sources of $T$ being
$\{S\mid \nu(S)\subset T \text{ and } \dim S=\dim T\}$,
the density follows from dimension reasons.

(c) This is true by definition, see \cite[Lemma 2.2.10]{FriedmanBook} for more details.
\end{proof}

\begin{definition}\label{def:RelEquiv2}
Let $X$ be a CS set. Two strata $S,S'\in \cS$ are  \emph{equivalent} 
if there exist $x_{0}\in S$ and $x_{1}\in S'$ with $x_{0}\sim x_{1}$. This equivalence relation is denoted by $S\sim S'$.
\end{definition}

\begin{remarque}\label{poke}
The original topological invariance of the intersection homology was established by  Goresky and  MacPherson 
in \cite{GM1}. 
King proposed a different approach in \cite{MR800845} by proving that the intrinsic aggregation 
$\nu \colon \ X \to X^*$ induces an isomorphism in intersection homology (see also \cite[Section 5.5.3]{FriedmanBook}). 
King did it with slight more general perversities than GM-perversities. 
He was using loose perversities $\ov{p}$ verifying 
\begin{enumerate}[(i)]
\item $\ov p \geq \ov 0$,
\item $\ov p( k) \leq \ov p(k+1) \leq \ov p(k) +1 $ if $k\geq 1$.
\end{enumerate}
We call them \emph{King positive perversities.}
If we write them by using strata, the conditions
(i) and (ii) are equivalent to 
\begin{enumerate}[(i')]
\item $\ov p (S) \geq 0$ if $S \in \cS$,
\item $\codim S' \leq \codim S \Longrightarrow \ov p( S' ) \leq \ov p(S) \leq \ov p(S') +\codim S -\codim S' $ 
for each singular strata $S,S'$. 
\end{enumerate}
Our next step is to weaken as much as possible these two conditions in order to obtain 
the broadest family of loose perversities where topological invariance holds. 
These are the K-perversities we define below.

\smallskip
Let us also observe that, in the case of a  loose perversity, the same perversity is defined on $X$ and $X^*$. 
At the opposite,
in our setting a perversity is defined on the set of strata of  $X$. Thus, the first point is 
``how can we construct a perversity on $X^*$ from a perversity on $X$?''
This the heuristic justification of the  condition (C) in the next definition.
\end{remarque}

\begin{definition}\label{def:llperversite}
A \emph{K-perversity on a CS set} $X$ is a perversity   $\ov{p}$ verifying the following properties.

(A)  If $S \sim R$ are two strata of $\cS$, with $S$ singular and $R$ regular, then  $0\leq \ov{p}(S)$.

(B) If $S'$ is singular and a source  stratum then
$$S \preceq S' \text{ and } S\sim S'
\Longrightarrow \ov{p}(S') \leq \ov{p}(S) \text{ and } D\ov{p}(S')\leq D\ov{p}(S).
$$

(C) If  $S ,S' \in \cS$ are two source strata of the same stratum then  
$\ov{p}(S)=\ov{p}(S').$

\medskip\noindent
Moreover, a \emph{$K^*$-perversity} is a $K$-perversity verifying the following condition.

(D) If $S \sim R$ are two strata of $\cS$, with $S$ singular and $R$ regular, then  $0\leq D\ov{p}(S)$.

\end{definition}

The necessity of these conditions are studied in \remref{rem:yes}. We begin with a
 list of some basic observations related to this definition.
\begin{remarque} \label{69}
$\bullet$ A  King positive perversity is a K-perversity. 

$\bullet$ Condition (D) is 
satisfied if there is no singular stratum of $X$ becoming regular in $X^*$.

$\bullet$ A perversity $\ov p$ is a K$^*$-perversity if, and only if, $\ov p$ and $D\ov p$ are $K$-perversities.
In fact, conditions (A) and (D) imply $0 \leq \ov p(S) \leq \codim S -2$, when $S \sim R$ are two strata of $\cS$
 with $S$ singular and $R$ regular. In this case, we have $\codim S \neq 1$.

$\bullet$ Any perversity $\ov p$ on the intrinsic CS set  $X^*$ is a  K-perversity. 
The pull-back perversity $\nu^*\ov p$ is also a  K-perversity.

$\bullet$ If $\ov{p}$ is a loose perversity, we have the folloxing equivalences,
\begin{eqnarray*}
\ov{p}(k+1)\leq \ov{p}(k)+1
&\Longleftrightarrow&
k-\ov{p}(k)-2\leq k+1-\ov{p}(k+1)-2\\
&\Longleftrightarrow&
D\ov{p}(k)\leq D\ov{p}(k+1).
\end{eqnarray*}
Thus condition (B) expressed in terms of codimension coincides with  (\ref{equa:king}).
\end{remarque}

We show how to construct a perversity on $X^*$ from a K-perversity on $X$.
\begin{proposition}\label{prop:source2}
Let  $X$ be a  CS set with intrinsic aggregation  $\nu\colon X\to X^*$ and $\ov{p}$ be  a K-perversity on $X$.
Then the following properties are satisfied.
\begin{enumerate}[\rm (a)]
\item The  map  $\nu_{*}\ov{p}$ given by
$$\nu_{*}\ov{p}(T)=\ov{p}(S),$$
where $S\in\cS_{X}$ is a source of the stratum $T \in\cS_{X^*}$, is a perversity on $X^*$.
\item Let  $U \subset X$ be an open subset.
Denote by 
$\iota\colon U\to X$, $j\colon U^*\to X^*$ the canonical injections and
$\nu^U\colon U\to U^*$ the intrinsic aggregation of $U$.
 Then, we have
$$\nu^U_{*}\,\iota^*\,\ov{p}=
j^*\,\nu_{*}\,\ov{p}.$$
\item The intrinsic aggregation induces a 
  chain map 
  $\nu_{*}\colon C^{\ov{p}}_{*}(X;G)\to C^{\nu_{*}\ov{p}}_{*}(X^*;G)$.
Moreover, if $\ov p$ is a K$^*$-perversity,  then it induces a chain map
 $\nu_{*}\colon \gC^{\ov{p}}_{*}(X;G)\to \gC^{\nu_{*}\ov{p}}_{*}(X^*;G)$.
\end{enumerate}
\end{proposition}

\begin{proof}
(a) The equality  $\nu_{*}\ov{p}(T)=\ov{p}(S)$ makes sense since any stratum  $T \in\cS^*$ has a source,
see \propref{prop:source}(b). This definition does not depend on the choice of $S$ as condition (C) implies.

(b) 
Any stratum of $U^*$ is a connected component,  ${}_{cc}(T \cap U^*)$, of the intersection of 
$U$ with a stratum $T$ of $X^*$.
A source of ${}_{cc}(T \cap U^*)$ is a connected component,  ${}_{cc}(S\cap U)$,
 of the intersection of  $U$ with a source $S$  of $T$. So, from item (a) and \defref{def:perversiteimagereciproque}, we get
$$\left\{
\begin{array}{l}
(j^*\nu_{*}\ov{p})( {}_{cc}(T \cap U^*) )
=
\nu_{*}\ov{p}( T)
=
\ov{p}(S),
\\
(\nu^U_{*}\iota^*\ov{p})( {}_{cc}(T \cap U^*) )
=
\iota^*\ov{p}( {}_{cc}(S \cap U) ) 
=
\ov{p}(S),
\end{array}\right.
$$
and  $\nu^U_{*}\,\iota^*\,\ov{p}=
j^*\,\nu_{*}\,\ov{p}.$

(c) 
Since $\nu$ is a stratified map (cf. \propref{prop:source}(a)), we can apply   \propref{prop:stratifieethomotopie}.
Let $S\in\cS_{X}$ be a singular stratum and set $T=S^{\nu}$. From \propref{prop:source}(b),
there exists a source stratum $R\in\cS_{X}$ of $T$ such that $R\preceq S$ and $R\sim S$.
From \defref{def:perversiteimagereciproque}, the definition of $\nu_{*}\ov{p}$ and Property $(B)$ of 
\defref{def:llperversite}, we get
\begin{eqnarray*}
 \nu^* D\nu_{*}\ov{p}(S)
 &=&
 D\nu_{*}\ov{p}(T)=\codim T-2-\nu_{*}\ov{p}(T)
 =
 \codim R-2-\ov{p}(R)\\
 &=&D\ov{p}(R)\leq D\ov{p}(S).
\end{eqnarray*}

For the second part of the statement, we still use \propref{prop:stratifieethomotopie} and we are reduced to
the proof of the inclusion $\nu(X_{\ov{p}})\subset \Sigma_{X^*}$. For that, we consider
$S\in \cS_{X}$ such that $\ov{t}(S) < \ov{p}(S)$ and we have to show 
$\nu(S)\subset \Sigma_{X^*}$. Let us suppose that this last inclusion is false. 
Then $\nu(S)$ is included in a regular stratum $T$ of $X^*$ and there exists a source $R\in\cS_{X}$ of $T$
such that $S\sim R$, see \propref{prop:source}.
Property (D) of $\ov{p}$ implies $D\ov{p}(S)\geq 0$ which is equivalent to
$\ov{p}(S)\leq \ov{t}(S)$. We are arriving to a contradiction, thus $\nu(S)$ is included in $\Sigma_{X^*}$
and $\nu$ induces a chain map,
$\nu_{*}\colon \gC^{\ov{p}}_{*}(X;G)\to \gC^{\nu_{*}\ov{p}}_{*}(X^*;G)$.
\end{proof}

\begin{theorem}\label{thm:invariancegenerale}
Let $(X,\ov{p})$  be a perverse CS set with a K-perversity.  
Then, the intrinsic aggregation $\nu \colon X \to X^*$
verifies the following properties.
\begin{enumerate}[(i)]
\item The map  $\nu$ 
 induces an isomorphism
$$H_{*}^{\ov{p}}(X;G) \cong  H_{*}^{\nu_{*}\ov{p}}(X^*;G).$$
\item Moreover, if $\ov p$ is a K$^*$-perversity, then the map  $\nu$ also induces an isomorphism
$$\gH_{*}^{\ov{p}}(X;G) \cong  \gH_{*}^{\nu_{*}\ov{p}}(X^*;G).$$
\end{enumerate}
\end{theorem}

This  result contains the topological invariance of  Goresky and MacPherson, 
\cite[Section 4.1]{GM1} and the more general version of  King 
\cite{MR800845}, (see also \cite[Theorems 5.5.1]{FriedmanBook}). Let us see that.

\begin{corollaire}\label{cor:kinginvariance}
Let $X$ be a CS set endowed with  a King positive perversity, $\ov{p}$.
The intrinsic aggregation   $\nu \colon X \to X^*$ induces the isomorphism
$H_{*}^{\ov{p}}(X;G) \cong H_{*}^{\ov{p}}(X^*;G)$.
\end{corollaire}

\begin{proof} 
From \remref{69}, the perversity $\ov p$ is a K-perversity.
Thus  \thmref{thm:invariancegenerale} gives the result if we prove  $\nu_{*}\ov{p}=\ov{p}$.
Let us notice that in this equality, the perversity $\ov{p}$ is a codimension-dependent  perversity
while $\nu_{*}\ov{p}$ is defined on the set of strata. 
However, we can consider $\ov{p}$ as a map $\ov{p}\colon \cS_{X}\to \N$ by
$\ov{p}(S)=\ov{p}(\codim S)$.
Thus this equality makes sense and we establish it by considering 
$T\in\cS^*$  and $S$  a source stratum of $T$. Then, we have,
$
\nu_* \ov p(T) = \ov p(S)  =\ov p(\codim S) = \ov p (\codim T)=\ov{p}(T).
$
\end{proof}

We deduce now from \thmref{thm:invariancegenerale} the behaviour of the two intersection homologies
 in the case of a refinement. 
As recalled in the introduction, refinements have been treated by G. Valette in the PL  framework  \cite{MR3175250}
and by G. Friedman in
\cite{MR2209151}. We answer this question here for CS sets. As we have to consider several stratifications 
on the same topological space, we denote by the pair $(X,\cS)$  the topological space $X$
 together with a set of strata $\cS$.
The corresponding $\ov{p}$-intersection homology with coefficient in $G$ is denoted $H_{*}^{\ov{p}}(X,\cS;G)$
or simply $H_{*}^{\ov{p}}(X,\cS)$ if there is no ambiguity.

\begin{definition}\label{def:refinement}
A CS set $(X,\cS)$  is a \emph{ refinement } of the
CS set $(X,\cT)$, if any stratum of $\cT$ is a union of strata of $\cS$.
Let us observe that this relation is equivalent to the fact
that the identity map, induces a stratified map $\kappa \colon (X,\cS) \to (X,\cT)$.
\end{definition}

In particular, any CS set is a refinement of its associated intrinsic CS set.

\begin{corollaire}\label{refin}
Let $\kappa\colon (X,\cS) \to (X,\cT)$ be a refinement between two CS sets. For any K-perversity $\ov p$ on $(X,\cT)$, the identity induces the isomorphism
\begin{equation}\label{uno}
H_{*}^{\kappa^*\ov{p}}(X,\cS;G) \cong  H_{*}^{\ov{p}}(X,\cT;G).
\end{equation}
Moreover, if $\ov p$ is a K$^*$-perversity  and 
there is no  codimension one stratum $S \in \cS$ becoming regular in $\cT$, then 
 the identity map induces an isomorphism
\begin{equation}\label{due}
\gH_{*}^{\kappa^*\ov{p}}(X,\cS;G) \cong  \gH_{*}^{\ov{p}}(X,\cT;G).
\end{equation}
\end{corollaire}

\begin{remarque}\label{rem:invariance}
In particular, for any perversity $\ov{p}$ on $X^*$, we have
$
H_{*}^{\nu^*\ov{p}}(X;G) \cong  H_{*}^{\ov{p}}(X^*;G)
$ and
$
\gH_{*}^{\nu^*\ov{p}}(X;G) \cong  \gH_{*}^{\ov{p}}(X^*;G)
$,
with the previous property on regular strata.
Let us also observe that the perversities $\ov{q}$ on $X$ that are obtained as pull-back's of a perversity
defined on $X^*$ are characterized by
$$S\sim S' \Rightarrow
\ov{q}(S)=\ov{q}(S').$$
\end{remarque}

\begin{proof}[Proof of \corref{refin}]
Both  CS sets have the same intrinsic space since it does not depend on the stratification. 
Thus, the two intrinsic aggregations
$\nu\colon (X,\cS) \to (X^*, \cS^*)$ and $\nu'\colon (X,\cT) \to (X^*, \cT^* )= (X^*, \cS^*)$
verify $\nu' \circ \kappa = \nu$.

Let  $S,Q \in \cS$. (Recall \defref{def:RelEquiv2} and the notation $S^f$ from \defref{def:applistratifiee}.) We have:
\begin{enumerate}[(i)]
\item $
S \sim Q \Longleftrightarrow  S^\nu \sim   Q^{\nu}   \Longleftrightarrow 
(  S^{\kappa})^{\nu'}  \sim (  Q^{\kappa})^{\nu'}  
\Longleftrightarrow 
  S^\kappa   \sim   Q^\kappa
$,
since two points belonging to the same stratum of a CS set are equivalent;
\item $
S \preceq Q \Longleftrightarrow S \subset  \ov Q \Longrightarrow  S^{\kappa} \subset \ov { Q^\kappa}
  \Longleftrightarrow   S^\kappa \preceq  Q^\kappa
$;
\item If $S$ is a source stratum of $U\in\cS^*$, then
$S^\kappa\in \cT$ is a source stratum of $U$. Let us see that.
We have $U=  S^\nu$ and $\dim S = \dim U$. This gives
$U = (  S^\kappa)^{\nu'}$. 
Moreover, since $\nu'$ and $\kappa$ are stratified maps,
we have
$
\codim U \leq \codim  S^\kappa \leq \codim S = \codim U
$. We deduce $\dim U = \dim  S^\kappa$. 
\end{enumerate}

\medskip
$\bullet$
\thmref{thm:invariancegenerale} gives the first assertion \eqref{uno} if we prove that
\begin{enumerate}[(a)]
\item the perversity $\kappa^*\ov p$ is a K-perversity,
\item and $\nu'_* \ov p = \nu_* \kappa^* \ov p$.
\end{enumerate}
Let us  verify the properties (A), (B) and (C) of \defref{def:llperversite} for $\kappa^*\ov{p}$ and (b).

(A) Let $S,R \in \cS$ be two strata with $S \sim R$ and $R$ regular.
If $ S^\kappa $ is regular, we have $\kappa^* \ov p (S) = \ov p(  S^\kappa) =0$. 
Suppose now that $S^\kappa $ is singular. Since the map $\kappa$ is a stratified map, 
then  $ R^\kappa$ is a regular stratum. 
We know that $ S^\kappa \sim  R^\kappa$. Thus,  Property (A) for the perversity ${\ov p}$ gives:
$
\kappa^* \ov p (S) = \ov p(  S^\kappa) \geq 0
$.

(B) Let $S,S' \in \cS$ where $S'$ is  singular  and a source stratum of  $U \in \cS^*$. 
We suppose $S \preceq S' \text{ and } S\sim S' $.
From previous properties (i), (ii), (iii), we deduce that
${S'}^\kappa $ is singular  and a source stratum of  $U$, with
$S^\kappa \preceq  {S'}^\kappa$   and $  S^\kappa \sim  {S'}^\kappa$. Thus, Property (B) for the perversity $\ov{p}$ gives
$\ov p (  {S'}^\kappa  )\leq \ov p (  S^\kappa  )$
and
$D\ov p (  {S'}^\kappa  )\leq D\ov p (  S^\kappa  )$
which is equivalent to
$$\kappa^*\ov p (  {S'}  )\leq \kappa^*\ov p (  S )
\text{ and }
\kappa^*D\ov p (  {S'}  )\leq \kappa^*D\ov p (  S  ).$$
It remains to prove that  $D  \kappa^* \ov p (S') \leq D   \kappa^* \ov p (S)$. 
Let us notice that $\dim S' =\dim U =\dim  {S'}^\kappa $. Since $\kappa$ is a stratified map, we have
\begin{eqnarray*}
D  \kappa^* \ov p (S')
&=&
\codim S' - 2 - \kappa^* \ov p (S') =
\codim   {S'}^\kappa   - 2 -  \ov p (  {S'}^\kappa ) \\
&=& D \ov p (  {S'}^\kappa  ) 
\leq  D \ov p (   {S}^\kappa  )  = \codim (  {S}^\kappa  ) -2 -\ov p (  {S}^\kappa  )
  \\
 &\leq &  \codim S -2 -\kappa^* \ov p (S) = D\kappa^*\ov p(S).
 \end{eqnarray*} 
 
(C) If $S,S' \in \cS$ are two source strata of a stratum $U \in \cS^*$, then 
$ S^\kappa,  {S'}^\kappa \in \cT$ are two source strata of $U$, cf. (iii). Thus using Property (C) for $\ov{p}$ gives 
$$
\kappa^* \ov p (S) = \ov p (  S^\kappa  ) = \ov p (  {S'}^\kappa ) = \kappa^* \ov p (S').
$$

(b) Let $U$ be a stratum of $\cS^*=\cT^*$. Consider $S\in\cS$ a source stratum of $U$. 
Since $S^{\kappa}\in\cT$ is a source stratum of $U$ (cf. (iii)), then we have
$$\nu_{*}\kappa^*\ov{p}(U)=
\kappa^*\ov{p}(S)=\ov{p}(S^{\kappa})=\nu'_{*}\ov{p}(U).
$$

\medskip 
$\bullet$
The proof of \eqref{due} is reduced to the verification of Property  (D) of \defref{def:llperversite}.
 Let $S,R \in \cS$  be two strata with $S \sim R$, $S$ singular and $R$ regular.
 
-- If $ S^\kappa $ is regular, we have $D\kappa^* \ov p (S) = \ov t  (S) - \ov p(  S^\kappa) =\ov t (S)$. 
Since $S$ is singular and $\bi S \kappa$ is regular, we have $\codim S\geq 2$ by hypothesis. 
Thus, $D\kappa^* \ov p (S) \geq 0$.

-- If $ S^\kappa $ is singular, since the map $\kappa$ is a stratified map, the stratum  
$ R^\kappa$ is regular  and $\codim  S^\kappa \leq \codim S$. 
Thus, Property (D) for the perversity $\ov{p}$ implies,
$
D\kappa^* \ov p (S) = \codim S - \ov p ( S^\kappa) -2
 \geq 
 \codim  S^\kappa - \ov p ( S^\kappa) -2
 =
  D \ov p ( S^\kappa) \geq 0
$.
\end{proof}

The second statement of \corref{refin} also retrieves a result of Greg Friedman, without any restriction on the strata
of codimension one.

\begin{corollaire}{(\cite{MR2209151})}\label{cor:greg}
Let $X$ be a CS set, endowed with  a King positive perversity, $\ov{p}$.
Then the tame $\ov{p}$-intersection homology is invariant under restratifications that fix the singular set $\Sigma$
of $X$.
\end{corollaire}

\begin{proof}[Proof of \corref{cor:greg}]
We only have to observe that $\ov{p}$ is a $K^*$-perversity in the sense of \defref{def:llperversite}.

$\bullet$  Conditions (A) and (D) are satisfied, since the singular set is fixed.

$\bullet$   Condition (B) is exactly (\ref{equa:king}).

$\bullet$   Condition (C) is a consequence of the fact that two source strata of one stratum have the same dimension.
\end{proof}

The proof of  \thmref{thm:invariancegenerale} uses  \thmref{thm:gregtransformationnaturelle}. 
For doing that, we need to make explicit the behavior of a conical neighborhood.

\begin{proposition}\label{prop:local}
Let $(X,\ov{p})$ be a perverse  CS set where $\ov{p}$ is a  $\K$-perversity.
  Consider a stratum $S \in \cS$ and  $(U,\varphi)$ a conical chart of a point $x\in S$. 
If $\nu\colon X\to X^*$  is the intrinsic aggregation, then the following implication is verified
 $$\nu_{*}\colon 
 H^{\ov{p}}_{*}(U\backslash S;G)
 \xrightarrow[]{\cong}
 H^{\nu_{*}\ov{p}}_{*}((U\backslash S)^*;G)
 \Longrightarrow
 \nu_{*}\colon 
 H^{\ov{p}}_{*}(U;G)
 \xrightarrow[]{\cong}
 H^{\nu_{*}\ov{p}}_{*}(U^*;G)
 .$$
Moreover, if  $\ov p$ is a K$^*$-perversity, we have also 
 $$\nu_{*}\colon 
 \gH^{\ov{p}}_{*}(U\backslash S;G)
 \xrightarrow[]{\cong}
 \gH^{\nu_{*}\ov{p}}_{*}((U\backslash S)^*;G)
 \Longrightarrow
 \nu_{*}\colon 
 \gH^{\ov{p}}_{*}(U;G)
 \xrightarrow[]{\cong}
 \gH^{\nu_{*}\ov{p}}_{*}(U^*;G)
 .$$
\end{proposition}

\begin{proof}
From \propref{prop:source2}, we know that the map  $\nu_{*}$ is well defined.
We prove the first statement.
Let us begin by studying the CS set structures.
Without loss of generality, we can suppose  
$U = \R^k \times \rc W$, where $W$ is a compact filtered space, $S \cap U= \R^k \times \{ \tw\}$
and $\tw$ is the apex of the cone $\rc W$.
Following \cite[Proof of Proposition 6]{MR800845}, 
there exists a homeomorphism of stratified spaces
\begin{equation}\label{equa:homeo}
h\colon ( \R^k \times \rc W)^*\xrightarrow[]{\cong} \R^m\times \rc L,
\end{equation}
where $L$ is a compact filtered space  (maybe empty)
and $m\geq k$. 
Moreover, the map $h$ verifies,
\begin{equation}\label{equa:hetcone}
h(\R^k\times \{\tw\} ) \subset \R^m\times \{\tv\} \text{ and }
h^{-1}(\R^m\times \{\tv\})=\R^k\times \rc A,
\end{equation}
where $\tv$ is the  apex of the cone  $\rc L$ and $\R^k\times A \times ]0,1[$
is a manifold which is also an $(m-k-1)$-dimensional homological sphere.
Using these notations, the hypothesis and the conclusion of the first statement become
\begin{equation}\label{equa:hyp}
h \colon 
 H^{\ov{p}}_{*}(\R^k  \times \rc W \backslash (\R^k  \times \{\tw\})) 
\xrightarrow[]{\cong}
 H^{\nu_*\ov{p}}_{*}(\R^m  \times \rc L \backslash h(\R^k  \times \{\tw\}))
\end{equation}
and
\begin{equation}\label{equa:conc}
h\colon 
 H^{\ov{p}}_{*}(\R^k  \times \rc W)
\xrightarrow[]{\cong}
 H^{\nu_*\ov{p}}_{*}(\R^m  \times \rc L).
\end{equation}
We write $s = \dim W$  and $t=\dim L$.  
The homeomorphism \eqref{equa:homeo} implies  $k+s=m+t$ and therefore $s\geq t$ since $m\geq k$.

$\bullet$ \emph{First case. We suppose $t=-1$}. The result is immediate if $s=t=-1$,
that is $W=L=\emptyset$. 
So, we can suppose  $s\geq 0$ and   $t=-1$, that is $L=\emptyset$ and $W\neq \emptyset$.
As $L=\emptyset$ and $A=W$ is a homological sphere,
we deduce from \eqref{equa:hyp} and \eqref{equa:homeo}
$$ H^{\ov{p}}_{*}(W)=H^{\ov{p}}_{*}(\R^k  \times \rc W \backslash (\R^k  \times \{\tw\})) \cong H_{*}(W)
=H_{0}(W)\oplus H_{\dim W}(W).
$$
Following (A), we have $\ov{p}(\tw)=\ov{p}(\R^k\times \{\tw\})\geq 0$.
Thus the value of the $\ov{p}$-intersection homology
of a cone, determined in \propref{prop:homologiecone}, reduces to
$$
H^{\ov{p}}_{k}(\rc W)
\cong
\left\{
\begin{array}{ll}
H^{\ov{p}}_{k}( W)& \hbox{if } 0=k< \dim W-\ov{p}( \{ \tw\}),\\
G& \hbox{if } 0= k \geq \dim W-\ov{p}( \{ \tw \}),\\
0& \hbox{if } 0\ne k \geq \dim W-\ov{p}( \{ \tw \}).
\end{array}
\right.
$$
In the two cases, $\dim W=0$ or $\dim W>0$, we get 
$H^{\ov{p}}_{*}(\rc W)=G$.
In conclusion, we have
$$H^{\ov{p}}_{*}(\R^k\times \rc W)
= H^{\ov{p}}_{*}(\rc W)  =G=H^{\nu_{*}\ov{p}}_{*}(\R^n\times \rc L),
$$
(cf.  \corref{cor:RfoisX})  where the  last  equality  comes fom $L=\emptyset$.

\medskip
\emph{So, for the rest of the proof, we can suppose  $t\geq 0$  and  $s\geq 0$,} that is 
$W\neq \emptyset$
and $L\neq \emptyset$.

\propref{prop:source} gives a source stratum   $S_{m}$ of $\R^m \times \{ \tv\}$ verifying: $S_m$ is singular, 
$\R^k\times \{\tw\} \preceq S_{m}$
and
$\R^k\times \{\tw\}\sim S_{m}$.
We also have $\nu_*\ov{p}(\R^m \times \{ \tv\} ) = \ov{p}(S_m)$. 
Since $\ov{p}$ is a K-perversity, we deduce
\begin{eqnarray} \label{equa:kifo}
\ov{p}(\R^k \times \{ \tw\}) - \nu_*\ov{p}(\R^m \times \{ \tv\}) &=& \\  \nonumber
\ov{p}(\R^k \times \{ \tw\})  - \ov{p}(S_m) 
\leq _{(B)_{\ov p}}
\codim (\R^k\times \{ \tw\}) - \codim S_m  
&=&\\  \nonumber
\codim (\R^k\times \{ \tw\}) - \codim (\R^m\times \{ \tv\}) 
= \dim W - \dim L &=& s-t.
\end{eqnarray}

We continue the proof of   (\ref{equa:conc}),
$h\colon 
 H^{\ov{p}}_{i}(\R^k  \times \rc W)
\xrightarrow[]{\cong}
 H^{\nu_*\ov{p}}_{i}(\R^m  \times \rc L)$, by distinguishing several steps
 following the values of $i$.

\smallskip
$\bullet$ \emph{Second case. We suppose $0\neq i \geq s - \ov{p}(\R^k \times \{ \tw\} )$.}\\
  \corref{cor:RfoisX} and   \propref{prop:homologiecone} imply
$H^{\ov{p}}_{i}(\R^k \times \rc W)=0$  and we need to prove $ H^{\nu_*\ov{p}}_{i}(\R^m \times \rc L)=0$. 
We apply  \eqref{equa:kifo} and  obtain
$$i\geq s - \ov{p}(\R^k \times \{ \tw\}) \geq 
t - \nu_*\ov{p}(\R^m \times \{ \tv\} )= t-\nu_{*}\ov{p}(\{\tv\}).$$
 \corref{cor:RfoisX} and   \propref{prop:homologiecone} 
give
$ H^{\nu_*\ov{p}}_{i}(\R^m \times \rc L) =0$.

\smallskip
$\bullet$ \emph{Third case. We suppose $0 = i \geq s - \ov{p}(\R^k \times \{ \tw\})$}.\\
  \corref{cor:RfoisX} and   \propref{prop:homologiecone} imply
$ H^{\ov{p}}_{0}(\R^k \times \rc W;G)=G$. 
This group is generated by any point of the regular stratum of $\R^k \times \rc W$. 
Let us notice that $\nu$ sends this point on a  regular stratum of $\R^m \times \rc L$.
We  compute  $ H^{\nu_*\ov{p}}_{0}(\R^m \times \rc L;G)$.  
We apply (\ref{equa:kifo}) and we obtain
 $$s\leq  \ov{p}(\R^k \times \{ \tw\})\leq \nu_*\ov{p}(\R^m \times \{ \tv\} ) +s-t,$$
hence $t \leq \nu_*\ov{p}(\R^m \times \{ \tv\} )$. Thus,
 \corref{cor:RfoisX} and \propref{prop:homologiecone} imply 
 $ H^{\nu_*\ov{p}}_{0}(\R^m \times \rc L;G)=G$,  generated by any point of the regular stratum of $\R^m\times \rc L$. 
 This gives \eqref{equa:conc} since $h$ sends regular points to regular points.

\smallskip
$\bullet$ \emph{Fourth case. We suppose $i < s - \ov{p}(\R^k \times \{ \tw\})$}. \\
We have the following isomorphisms,
\begin{equation}\label{equa:cas3debut}
 H^{\nu_{*}\ov{p}}_{i} (\R^m\times \rc L\backslash h(\R^k\times \{\tw\}))
\cong_{(1)}
  H^{\ov{p}}_{i}((\R^k\times\rc W)\backslash (\R^k\times \{\tw\})) 
\cong_{(2)} 
  H^{\ov{p}}_{i}(\R^k\times \rc W),
\end{equation}
where $\cong_{(1)}$ is the hypothesis (\ref{equa:hyp}) and $\cong_{(2)}$ is   given by \corref{cor:RfoisX} and \propref{prop:homologiecone}.  
The commutative diagram
$$
\xymatrix{
\R^m\times \rc L\backslash h(\R^k\times \{\tw\}) 
\ar@{^(->}[d]^\iota&
\ar[l]_-h (\R^k\times\rc W)\backslash (\R^k\times \{\tw\}) \ar@{^(->}[d]^\iota\\
\R^m  \times \rc L 
&\ar[l]_-h\R^k\times \rc W
}
$$
gives (cf. \eqref{equa:cas3debut}) that \eqref{equa:conc}
 becomes
 \begin{equation}\label{equa:concTris}
\iota_* \colon H_i^{\nu_* \ov p} (\R^m\times \rc L\backslash h(\R^k\times \{\tw\})) \xrightarrow \cong
H_i^{\nu_* \ov p} ( \R^m  \times \rc L).
\end{equation} 
 Consider $h(\R^k\times\{\tw\})=B\times \{\tv\}\subset \R^m\times \{\tv\}$ with $B$  a closed subset.
Using the excision property (cf. \corref{prop:Excisionhomologie}) we get the following isomorphisms
\begin{align}
 H^{\nu_{*}\ov{p}}_{i}(\R^m\times \rc L\backslash h(\R^k\times \{\tw\}),\R^m\times \rc L\backslash \R^m\times\{\tv\})
&\cong&\nonumber \\
 H^{\nu_{*}\ov{p}}_{i}((\R^m\times \rc L )\backslash (B\times\{\tv\}) ,\R^m\times (\rc L\backslash \{\tv\}))
&\cong_{(1)}&\nonumber \\
 H^{\nu_{*}\ov{p}}_{i} ((\R^m\backslash B)\times \rc L, (\R^m\backslash B)\times (\rc L\backslash \{\tv\}))
&\cong&\nonumber \\
 H^{\nu_{*}\ov{p}}_{i} ((\R^m\backslash B)\times (\rc L,\rc L\backslash \{\tv\}))
&\cong_{(2)}&\nonumber \\
 H^{\nu_{*}\ov{p}}_{i} (\R^{k+1}\times A\times (\rc L,\rc L\backslash \{\tv\}))
&\cong_{(3)}&\nonumber \\
 H^{\nu_{*}\ov{p}}_{i}(\rc L,\rc L\backslash \{\tv\})\oplus 
 H^{\nu_{*} \ov{p}}_{i-m+1+k} (\rc L,\rc L\backslash \{\tv\}), \label{equa:tropdeL}
\end{align}
where $\cong_{(1)}$ is the excision of $B\times (\rc L\backslash \{\tv\})$,
$\cong_{(2)}$ comes from (\ref{equa:hetcone}) and $\cong_{(3)}$ from the relative version of
\corref{cor:SfoisX}.
Let us notice that this formula applies for any degree~$i$. In particular, the canonical projection 
$
 \pr_2 \colon \R^m\times \rc L \to \rc L
 $
induces the epimorphism
\begin{equation} \label{eq:EpideL}
 H^{\nu_{*}\ov{p}}_{i+1}(\R^m\times \rc L\backslash h(\R^k\times \{\tw\}),\R^m\times \rc L\backslash \R^m\times\{\tv\})
\to
 H^{\nu_{*}\ov{p}}_{i+1}(\rc L,\rc L\backslash \{\tv\}).
 \end{equation}
Since $i < s - \ov{p}(\R^k \times \{ \tw\})$, we have
$$i-m+1+k<s - \ov{p}(\R^k \times \{ \tw\})-m+1+k=t-\ov{p}(\R^k \times \{ \tw\})+1.$$
 \propref{prop:source} gives a source stratum   $S_{m}$ of $\R^m \times \{ \tv\}$ verifying 
$\R^k\times \{\tw\} \preceq S_{m}$
and
$\R^k\times \{\tw\}\sim S_{m}$.
We also have $\nu_*\ov{p}(\R^m \times \{ \tv\} ) = \ov{p}(S_m)$. 
Since  $\ov{p}$ is a  K-perversity, then
\begin{equation*}\label{equa:equa}
0\leq \ov{p}(\R^k \times \{ \tw\})  - \ov{p}(S_m) =\ov{p}(\R^k \times \{ \tw\}) - \nu_*\ov{p}(\R^m \times \{ \tv\}).
\end{equation*}
We get $i-m+1+k\leq t- \nu_*\ov{p}(\R^m \times \{ \tv\})$.
Since $L\neq\emptyset$, using  \corref{cor:homologieconerel},  we conclude that 
the second term of the direct sum  (\ref{equa:tropdeL}) vanishes.
We have proven that the canonical projection $\pr_2$ induces the isomorphism
\begin{equation}\label{eq:IsodeL}
H^{\nu_{*}\ov{p}}_{i}(\R^m\times \rc L\backslash h(\R^k\times \{\tw\}),\R^m\times \rc L\backslash \R^m\times\{\tv\})
\xrightarrow {\cong}
 H^{\nu_{*}\ov{p}}_{i}(\rc L,\rc L\backslash \{\tv\}).
 \end{equation}
 Set $A=(\R^m \times \rc L) \menos h(\R^k \times \{ \tw\})$
 and $B=(\R^m \times \rc L) \menos (\R^m \times \{ \tv\})$.
The long exact sequences associated to the pairs
$(A,B)$
and 
$
(\rc L, \rc L \menos \{ \tv\})
$ 
are related as follows
$$
\xymatrix@C=3,5mm{
 H_{i+1}^{\nu_*\ov p}(A,B) \ar[r]  \ar[d]^{P_{A,B} }  & H_i^{\nu_*\ov p} (B) \ar[r]  \ar[d]^{P_B} & H_i^{\nu_*\ov p} (A) \ar[r]  \ar[d]^{P_A}  
& H_{i}^{\nu_*\ov p}(A,B) \ar[r]  \ar[d]^{P_{A,B}}& H_{i-1}^{\nu_*\ov p}(B) \ar[d]^{P_B} \\
H^{\nu_*\ov p}_{i+1}(\rc L,\rc L\backslash \{\tv\})
\ar[r]&
H^{\nu_*\ov p}_i(\rc L\backslash \{\tv\}) \ar[r] &H^{\nu_*\ov p}_i(\rc L) \ar[r] &H^{\nu_*\ov p}_i(\rc L,\rc L\backslash \{\tv\})
\ar[r]&H^{\nu_*\ov p}_{i-1}(\rc L\backslash \{\tv\}) 
}
$$
where the vertical maps are induced by the canonical projection  $\pr_2 $.
  \corref{cor:RfoisX} gives us that the morphisms $P_B$ are isomorphisms. Keeping in mind \eqref{eq:EpideL} and  \eqref{eq:IsodeL}, we  apply the Five Lemma and conclude that $P_A$ is an isomorphism. Thus, the map 
$ \pr_{2,*} \colon H_i^{\nu_*\ov p} (\R^m\times \rc L\backslash \R^m\times\{\tv\}) \to H_i^{\nu_*\ov p} ( \rc L) $
 is an isomorphism.
Finally,   \corref{cor:RfoisX} gives \eqref{equa:concTris}.
 
\smallskip
This implies the first statement.

\medskip
The proof of the second statement follows the same process, except for the intersection homology 
of a cone  in degree~0.
We distinguish the same cases.

$\bullet$ \emph{First case}.  We can suppose $s\geq 0$ and $t=-1$, that is:
$L=\emptyset$ and $A=W\neq\emptyset$ is a homological sphere. From \eqref{equa:hyp}, we deduce
$\gH_{*}^{\ov{p}}(W)=H_{0}(W)\oplus H_{\dim W}(W)$.
Finally, the hypotheses (A) and (D) imply
$$1\leq \dim W-\ov{p}(\{\tw\})\leq \dim W.$$
Recall from \propref{prop:homologiecone},
$$\gH^{\ov{p}}_{k}(\rc W)
\cong
\left\{
\begin{array}{ll}
\gH^{\ov{p}}_{k}( W)& \hbox{if } k< \dim W-\ov{p}( \{ \tw \}),\\
0 & \hbox{if }  k \geq \dim W-\ov{p}( \{ \tw \}).
\end{array}\right.$$
We consider the first line. The previous observations show that the condition $k=\dim W$ cannot occur and that
the only possibillity is $k=0$. Therefore, we get, as required,
$$ \gH_{*}^{\ov{p}}(\R^k \times \rc W)  =  \gH_{*}^{\ov{p}}(\rc W) =H_{0}(W)=G = H_0^{\nu_* \ov p}(\R^m \times \rc L)  = H_*^{\nu_* \ov p}(\R^m \times \rc L).
$$

$\bullet$ \emph{Second case}.  This is the same proof.

$\bullet$ \emph{Third case}. Since $0 = i \geq s - \ov{p}(\R^k \times \{ \tw\} )$ and  
$ 0=i \geq t - \nu_*\ov{p}(\R^m \times \{ \tv\} )$,
 we get   $  \gH^{\ov p}_{0}(\rc W) =0 =   \gH^{\nu_*\ov p}_{0}(\rc L)$ from \propref{prop:homologiecone}.

$\bullet$ \emph{Fourth case}. This is the same proof.
\end{proof}

\begin{proof} [Proof of \thmref{thm:invariancegenerale}]
The proof is reduced to the verification of the hypotheses of \thmref{thm:gregtransformationnaturelle}, where
$\Phi_{U}\colon H^{\ov{p}}_{*}(U)\to H^{\nu_{*}\ov{p}}_{*}(U^*)$
(resp. $\Phi_{U}\colon \gH^{\ov{p}}_{*}(U)\to \gH^{\nu_{*}\ov{p}}_{*}(U^*)$)
is the natural transformation induced by the intrinsic aggregation $\nu^{U}\colon U\to U^*$,
see \propref{prop:source}(c) and
\propref{prop:source2}.
First, conditions (ii) and  (iv) of \thmref{thm:gregtransformationnaturelle} are direct. 
Condition (i) comes from \propref{prop:MVhomologie}
and condition (iii) is exactly  \propref{prop:local}.
\end{proof}

\begin{remarque}\label{rem:yes}
Here, we discuss the necessity of the conditions imposed in \defref{def:llperversite}. 

$\bullet$ First, let us observe that condition (C) is the key point for defining $\nu_{*} \ov{p}$ from $\ov{p}$. 
This is the keystone of this topological invariance.

$\bullet$ We present now an example illustrating the necessity of conditions (A) and (D). 
We consider the cone on a sphere,
$X=\rc S^n$, with one singular stratum $S=\{\tv\}$. We have:\\
--- $S\sim R$ with $R$ a regular stratum,\\
--- $X^*=E$, the open unit disk of dimension $n+1$.

We first choose the perversity $\ov{p}$ defined by $\ov{p}(S)=-1$, which implies
$D\ov{p}(S)=n$ and $\nu_{*}\ov{p}=0$. Therefore, \emph{(A) is not verified but (D) is.} Classical computations show that
the conclusion of \thmref{thm:invariancegenerale} is not true in this case:
\begin{eqnarray*}
\gH_{*}^{\ov{p}}(\rc S^n)
&=&
H_{*}^{\ov{p}}(\rc S^n)=H_{0}(S^n)\oplus H_{n}(S^n)=G\oplus G,\\
\gH_{*}^{\nu_{*}\ov{p}}(E)
&=&
H_{*}^{\nu_{*}\ov{p}}(E)=H_{*}(E)=G.
\end{eqnarray*}

Next, we set $\ov{p}(S)=n$, which implies $D\ov{p}(S)=-1$ and $\nu_{*}\ov{p}=0$. 
Therefore, \emph{(D) is not verified but (A) is.}
Here, we have
$\gH_{*}^{\ov{p}}(\rc S^n)=0$ and $\gH_{*}^{\nu_{*}\ov{p}}(E)=G$,
which also is in contradiction with the conclusion of \thmref{thm:invariancegenerale}.

$\bullet$ We develop an example, due to H. King (\cite{MR642001}), that \emph{proves the necessity of (B).}
We set $L=S^1\times S^1$ and $X=\rc (\Sigma L)$. This space has three singular strata, $S$, $S_{1}$ and $S_{2}$, with
$\dim S=0$, $\dim S_{1}=\dim S_{2}=1$, $S\preceq S_{1}$, $S\preceq S_{2}$ and $S\sim S_{1}\sim S_{2}$.
The intrinsic aggregation is $X^*=\R\times \rc L$, with one singular stratum, $T=S\sqcup S_{1}\sqcup S_{2}$, 
$\dim T=1$. We define a perversity on $X$ by $\ov{p}(S)=a$, $\ov{p}(S_{1})=\ov{p}(S_{2})=b$. (Let us notice
that this equality is condition (C) which allows the definition of $\nu_{*}\ov{p}$ on $X^*$.) By definition 
(see \propref{prop:source2}), we have $\nu_{*}\ov{p}(T)=b$. Classical computations give:
\begin{eqnarray*}
\gH_{*}^{\nu_{*}\ov{p}}(X^*)
&=&
\gH_{*}^{\nu_{*}\ov{p}}(\rc L)=H_{\leq b}(L),\\
\gH_{*}^{\ov{p}}(X)
&=&
\gH_{*}^{\ov{p}}(\rc \Sigma L)=\gH^{\ov{p}}_{\leq a}(\Sigma L)\\
&=&
\left\{
\begin{array}{ll}
H_{\leq a}(L) & \text{if } a\leq b,\\
H_{\leq b}(L) & \text{if } a=b+1,\\
H_{\leq b}(L) \oplus H_{b+1}(L)\oplus\dots\oplus H_{a-1}(L)& \text{if } a\geq b+2.
\end{array}\right.
\end{eqnarray*}
We deduce $\gH_{*}^{\nu_{*}\ov{p}}(X^*)=\gH_{*}^{\ov{p}}(X)$ if, and only if,
$a=b$ or $a=b+1$; i.e., 
$$\gH_{*}^{\nu_{*}\ov{p}}(X^*)=\gH_{*}^{\ov{p}}(X)
\Longleftrightarrow
\ov{p}(S)=\ov{p}(S_{i}) \text{ or } \ov{p}(S)=\ov{p}(S_{i})+1.$$
In conclusion, $\gH_{*}^{\nu_{*}\ov{p}}(X^*)=\gH_{*}^{\ov{p}}(X)$ if, and only if, (B) is verified.
\end{remarque}

\providecommand{\bysame}{\leavevmode\hbox to3em{\hrulefill}\thinspace}
\providecommand{\MR}{\relax\ifhmode\unskip\space\fi MR }
\providecommand{\MRhref}[2]{%
  \href{http://www.ams.org/mathscinet-getitem?mr=#1}{#2}
}
\providecommand{\href}[2]{#2}


\begin{thebibliography}{10}

\bibitem{MR1127478}
Sylvain~E. Cappell and Julius~L. Shaneson, \emph{Singular spaces,
  characteristic classes, and intersection homology}, Ann. of Math. (2)
  \textbf{134} (1991), no.~2, 325--374. \MR{1127478 (92m:57026)}

\bibitem{CST5}
David Chataur, Martintxo Saralegi-Aranguren, and Daniel Tanr\'e,
  \emph{{Blown-up intersection cochains and {D}eligne's sheaves}}, arXiv
  e-prints (2018), arXiv:1801.02992.

\bibitem{CST4}
\bysame, \emph{Blown-up intersection cohomology}, An alpine bouquet of
  algebraic topology, Contemp. Math., vol. 708, Amer. Math. Soc., Providence,
  RI, 2018, pp.~45--102. \MR{3807751}

\bibitem{CST1}
\bysame, \emph{Intersection cohomology, simplicial blow-up and rational
  homotopy}, Mem. Amer. Math. Soc. \textbf{254} (2018), no.~1214, viii+108.
  \MR{3796432}

\bibitem{CST2}
\bysame, \emph{Poincar\'e duality with cap products in intersection homology},
  Adv. Math. \textbf{326} (2018), 314--351. \MR{3758431}

\bibitem{FriedmanBook}
Greg Friedman, \emph{Singular intersection homology}, Available at
  \url{http://faculty.tcu.edu/gfriedman/index.html}.

\bibitem{MR2209151}
\bysame, \emph{Superperverse intersection cohomology: stratification
  (in)dependence}, Math. Z. \textbf{252} (2006), no.~1, 49--70. \MR{2209151
  (2006m:55020)}

\bibitem{MR2276609}
\bysame, \emph{Singular chain intersection homology for traditional and
  super-perversities}, Trans. Amer. Math. Soc. \textbf{359} (2007), no.~5,
  1977--2019 (electronic). \MR{2276609 (2007k:55008)}

\bibitem{FM}
Greg Friedman and James~E. McClure, \emph{Cup and cap products in intersection
  (co)homology}, Adv. Math. \textbf{240} (2013), 383--426. \MR{3046315}

\bibitem{GM1}
Mark Goresky and Robert MacPherson, \emph{Intersection homology theory},
  Topology \textbf{19} (1980), no.~2, 135--162. \MR{572580 (82b:57010)}

\bibitem{MR642001}
Henry~C. King, \emph{Intersection homology and homology manifolds}, Topology
  \textbf{21} (1982), no.~2, 229--234. \MR{642001 (83j:57010)}

\bibitem{MR800845}
\bysame, \emph{Topological invariance of intersection homology without
  sheaves}, Topology Appl. \textbf{20} (1985), no.~2, 149--160. \MR{800845
  (86m:55010)}

\bibitem{MacPherson90}
Robert MacPherson, \emph{Intersection homology and perverse sheaves},
  Unpublished AMS Colloquium Lectures, San Francisco, 1991.

\bibitem{MR2164356}
Gabriel Padilla, \emph{Intersection cohomology of stratified circle actions},
  Illinois J. Math. \textbf{49} (2005), no.~2, 659--685 (electronic).
  \MR{2164356 (2006f:57024)}

\bibitem{MR2344739}
Gabriel Padilla and Martintxo Saralegi-Aranguren, \emph{Intersection cohomology
  of the circle actions}, Topology Appl. \textbf{154} (2007), no.~15,
  2764--2770. \MR{2344739 (2008h:55009)}

\bibitem{MR3183107}
Jos{\'e}~Ignacio Royo~Prieto and Martintxo~E. Saralegi-Aranguren,
  \emph{Equivariant intersection cohomology of the circle actions}, Rev. R.
  Acad. Cienc. Exactas F\'\i s. Nat. Ser. A Math. RACSAM \textbf{108} (2014),
  no.~1, 49--62. \MR{3183107}

\bibitem{MR2210257}
Martintxo Saralegi-Aranguren, \emph{de {R}ham intersection cohomology for
  general perversities}, Illinois J. Math. \textbf{49} (2005), no.~3, 737--758
  (electronic). \MR{2210257 (2006k:55013)}

\bibitem{ST1}
Martintxo {Saralegi-Aranguren} and Daniel {Tanr{\'e}}, \emph{{Poincar\'e
  duality, cap products and Borel-Moore intersection Homology}}, arXiv e-prints
  (2018), arXiv:1810.07498.

\bibitem{ST2}
\bysame, \emph{{Variations on Poincar\'e duality for intersection homology}},
  arXiv e-prints (2018), arXiv:1812.03072.

\bibitem{Sieb}
L.~C. Siebenmann, \emph{Deformation of homeomorphisms on stratified sets. {I},
  {II}}, Comment. Math. Helv. \textbf{47} (1972), 123--136; ibid. 47 (1972),
  137--163. \MR{0319207 (47 \#7752)}

\bibitem{MR0041420}
Yu. Smirnov, \emph{A necessary and sufficient condition for metrizability of a
  topological space}, Doklady Akad. Nauk SSSR (N.S.) \textbf{77} (1951),
  197--200. \MR{0041420 (12,845c)}

\bibitem{Span}
Edwin~H. Spanier, \emph{Algebraic topology}, McGraw-Hill Book Co., New York,
  1966. \MR{0210112 (35 \#1007)}

\bibitem{MR3175250}
Guillaume Valette, \emph{A {L}efschetz duality for intersection homology},
  Geom. Dedicata \textbf{169} (2014), 283--299. \MR{3175250}

\bibitem{Wil}
Stephen Willard, \emph{General topology}, Addison-Wesley Publishing Co.,
  Reading, Mass.-London-Don Mills, Ont., 1970. \MR{0264581}

\end{thebibliography}
\end{document}